\numberwithin{equation}{section}
\newtheorem{theorem}{Theorem}[section]
\newtheorem{lemma}[theorem]{Lemma}
\newtheorem{proposition}[theorem]{Proposition}
\newtheorem{corollary}[theorem]{Corollary}
\newtheorem{conjecture}[theorem]{Conjecture}
\theoremstyle{definition}
\newtheorem{remark}[theorem]{Remark}
\begin{document}

\parskip 4pt
\baselineskip 16pt


\title[Somos-4 equation and related equations]
{Somos-4 equation and related equations}

\author{Andrei K. Svinin}

\email{svinin@icc.ru}
\address{Matrosov Institute for System Dynamics and Control Theory, Siberian Branch of Russian Academy of Sciences, PO Box 292, 664033 Irkutsk, Russia}

\maketitle

\begin{abstract}
The main object of study in this paper is the well-known Somos-4 recurrence. We prove a theorem that any sequence generated by this equation  also satisfies Gale-Robinson one. The corresponding identity is written in terms of its companion elliptic sequence.  An example of such relationship  is provided by the second-order linear sequence which, as we prove using Wajda's identity, satisfies the Somos-4 recurrence with suitable coefficients. Also, we construct a class of solutions to Volterra lattice equation closely related to the second-order linear sequence.
\end{abstract}


\section{Introduction}

In this paper we  consider  three-term discrete quadratic equation of fourth order
\begin{equation}
t_nt_{n+4}=\alpha t_{n+1}t_{n+3}+\beta t_{n+2}^2,
\label{5907784}
\end{equation}
where $\alpha$ and $\beta$ are arbitrary parameters.

To start, it should be clever to recall a story about a  sequence \cite{Sloane}  http://oeis.org/A006 720, which is sometimes referred to as  Somos(4).  
It is defined by (\ref{5907784}) with $\alpha=\beta=1$ and initial data $(t_0, t_1, t_2, t_3)=(1, 1, 1, 1)$. Michael Somos once asked to prove noticed by him amazing fact of integrity of this sequence. Apparently, the proof of this fact was first published in the paper \cite{Malouf} by Malouf, but it seems that this proof does not reflect the essence of this phenomenon. 
It is now clear that this follows immediately from the Laurent property of this equation \cite{Fomin}.   
\begin{theorem}
Equation  (\ref{5907784}) has the Laurent property, that is,  given initial data $(t_0, t_1, t_2, t_3)$, all of the terms in the sequence are
Laurent polynomials in the variables $(t_0, t_1, t_2, t_3)$ whose coefficients are in $Z[\alpha, \beta]$,
so that $t_n\in \mathbb{Z}[\alpha, \beta, t_0^{\pm 1}, t_1^{\pm 1}, t_2^{\pm 1}, t_3^{\pm 1}]$ for all $n\in \mathbb{Z}$.
\end{theorem}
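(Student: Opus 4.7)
The plan is to apply the Caterpillar Lemma of Fomin and Zelevinsky, which is the standard mechanism for establishing the Laurent phenomenon for recurrences of Somos or Gale--Robinson type. Solving the recurrence both forward and backward,
\begin{equation*}
t_{n+4} = \frac{\alpha t_{n+1} t_{n+3} + \beta t_{n+2}^2}{t_n}, \qquad t_{n-1} = \frac{\alpha t_n t_{n+2} + \beta t_{n+1}^2}{t_{n+3}},
\end{equation*}
defines each $t_n$ unambiguously as a rational function in the initial data $(t_0, t_1, t_2, t_3)$ with coefficients in $\mathbb{Z}[\alpha, \beta]$. For each $k \in \mathbb{Z}$ introduce the window $W_k := \mathbb{Z}[\alpha, \beta][t_k^{\pm 1}, t_{k+1}^{\pm 1}, t_{k+2}^{\pm 1}, t_{k+3}^{\pm 1}]$ sitting inside the ambient fraction field; the goal is to show $t_n \in W_0$ for every $n$.

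First I would handle a handful of base cases ($n = 4, 5, 6, 7$ and $n = -1, \ldots, -4$) by direct substitution, in order to fix ideas and to observe the nontrivial cancellations explicitly. For the inductive step, the key structural input is the following \emph{cluster overlap} property associated to the exchange $t_k \leftrightarrow t_{k+4}$: any element of $W_k$ that also admits a Laurent expansion in the shifted coordinates of $W_{k+1}$ already belongs to $W_{k+1}$. This rests on a coprimality check, namely that $t_k$ and the exchange polynomial $\alpha t_{k+1} t_{k+3} + \beta t_{k+2}^2$ share no common factor inside $\mathbb{Z}[\alpha, \beta][t_k, t_{k+1}, t_{k+2}, t_{k+3}]$. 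Because the exchange polynomial does not depend on $t_k$ at all (Fomin--Zelevinsky's ``separation of additions" condition), this coprimality is immediate. Propagating along the chain $W_0 \leftrightarrow W_1 \leftrightarrow W_2 \leftrightarrow \cdots$ then transfers Laurentness in either direction, establishing $t_n \in W_0$ for all $n$.

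The main obstacle is precisely this propagation step: the distinct windows $W_k$ are not mutually equal as subrings of the fraction field, so membership in some $W_k$ with $k \neq 0$ does not a priori imply membership in $W_0$. The Caterpillar Lemma accomplishes the transfer by systematically tracking the intersections $W_j \cap W_{j+1}$ along the chain of exchanges and exploiting the coprimality at each one; once the ``separation of additions" and coprimality conditions have been verified at a single exchange step, the lemma mechanically upgrades them to a global Laurent property with coefficients in $\mathbb{Z}[\alpha, \beta]$, which is the assertion of the theorem.
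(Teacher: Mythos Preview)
The paper does not actually prove this theorem: it is stated in the introduction as a known result and attributed to Fomin and Zelevinsky's paper \emph{The Laurent Phenomenon}, with no argument given beyond the citation. Your proposal, which invokes the Caterpillar Lemma from that same reference, is therefore not an alternative to the paper's proof but rather a sketch of the very proof the paper is citing. In that sense your approach is entirely aligned with the paper.

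As a minor comment on the sketch itself: the Caterpillar Lemma requires slightly more than the single coprimality you highlight. One must also verify, for each node along the caterpillar tree, that after substituting the exchange relation into the \emph{neighbouring} exchange polynomials and clearing common factors, the resulting polynomials remain coprime to the relevant cluster variables. For Somos-4 this check is routine (and is carried out explicitly in Fomin--Zelevinsky for the full Gale--Robinson family), but your phrase ``this coprimality is immediate'' glosses over that second layer of verification. If you intend this as a complete proof rather than a pointer to the literature, that step should be spelled out.
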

As is known, equation (\ref{5907784}) is not the only one that has the Laurent property. For example, it follows from the Theorem 1.6 in \cite{Fomin} that this property have any equation of the form
\begin{equation}
t_nt_{n+N}=\alpha t_{n+p}t_{n+N-p}+\beta t_{n+q}t_{n+N-q}
\label{776655}
\end{equation}
which is known as a three-term  Gale-Robinson recurrence \cite{Gale}, \cite{Gale2}, \cite{Robinson}. It should be noted that the Somos-4  and Gale-Robinson equations appear  in Fomin and Zelevinsky's theory of cluster algebras \cite{Fordy} providing corresponding combinatorial structures.

It can however be said that the Somos-4 equation (\ref{5907784}) has  its own distinctive features. Namely, each  sequence generated by  this  recurrence is associated with a sequence of points $P_0+nP$ on a some suitable  elliptic curve \cite{Hone1}. 
\begin{theorem} \label{987}
The general solution of the Somos-4 equation (\ref{5907784}) can be written in the form
\begin{equation}
t_n=AB^n\frac{\sigma(z_0+n\kappa)}{\sigma(\kappa)^{n^2}},
\label{7766557654}
\end{equation}
where $z_0,\; \kappa\in \mathrm{Jac}(E)$ are nonzero suitable complex numbers, the constants $A$ and $B$ are given by 
\[
A=\frac{t_0}{\sigma(z_0)},\; B=\frac{t_1\sigma(z_0)\sigma(\kappa)}{\sigma(z_0+\kappa)}.
\]
\end{theorem}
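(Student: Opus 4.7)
The plan is to verify the ansatz \eqref{7766557654} by direct substitution into the Somos--4 equation \eqref{5907784} and then show that the parameters of the ansatz, together with the freedom in choosing the underlying curve $E$, can be matched to arbitrary initial data.

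First I would substitute $t_n = AB^n\sigma(z_0+n\kappa)/\sigma(\kappa)^{n^2}$ into \eqref{5907784}. The factor $A^2B^{2n+4}$ cancels uniformly, and the elementary identities $n^2+(n+4)^2 = 2n^2+8n+16$, $(n+1)^2+(n+3)^2 = 2n^2+8n+10$, and $2(n+2)^2 = 2n^2+8n+8$ guarantee that the powers of $\sigma(\kappa)$ redistribute with $n$-independent exponents. Setting $w = z_0+(n+2)\kappa$, the recurrence collapses to the single identity
\[
\sigma(w-2\kappa)\sigma(w+2\kappa) = \alpha\,\sigma(\kappa)^6\,\sigma(w-\kappa)\sigma(w+\kappa) + \beta\,\sigma(\kappa)^8\,\sigma(w)^2,
\]
which must hold as an identity in $w$.

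Next I would apply the classical two-variable sigma identity $\sigma(u+v)\sigma(u-v)=\sigma(u)^2\sigma(v)^2\bigl(\wp(v)-\wp(u)\bigr)$ with $(u,v)=(w,2\kappa)$ and $(w,\kappa)$. After dividing through by $\sigma(w)^2$, the claim becomes a polynomial of degree one in $\wp(w)$, and equating the coefficient of $\wp(w)$ and the constant term yields
\[
\alpha = \frac{\sigma(2\kappa)^2}{\sigma(\kappa)^8},\qquad \beta = \alpha\bigl(\wp(2\kappa)-\wp(\kappa)\bigr).
\]
Together with the Weierstrass cubic $(\wp')^2 = 4\wp^3-g_2\wp-g_3$, these relations encode the passage from $(\alpha,\beta)$ and one integral invariant of the orbit to the geometric data $(g_2,g_3,\kappa)$; the extra invariant is provided by the well-known first integral of the Somos--4 map, which essentially supplies $\wp(\kappa)$ algebraically in terms of $\alpha,\beta$ and four consecutive $t_n$'s.

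Having fixed $(E,\kappa)$, the remaining constants $A$, $B$, $z_0$ are then determined by imposing the conditions $t_0 = A\sigma(z_0)$, $t_1 = AB\sigma(z_0+\kappa)/\sigma(\kappa)$ and matching $t_2$; the value of $t_3$ agrees automatically because the closed-form already satisfies \eqref{5907784} by the first two paragraphs and because a Somos--4 orbit is uniquely propagated by four consecutive data. I expect the genuine obstacle to lie in this last parameter-counting step: one must show that for generic $(\alpha,\beta,t_0,\ldots,t_3)$ the system for $(g_2,g_3,\kappa,z_0)$ admits a nontrivial solution with $\sigma(z_0)$, $\sigma(\kappa)$, $\sigma(z_0+\kappa)$ all nonzero, which is the content hidden in the words \emph{general solution} and is most cleanly handled by regarding the map $(g_2,g_3,\kappa)\mapsto(\alpha,\beta,\wp(\kappa))$ as a local biholomorphism on a dense open set of its domain.
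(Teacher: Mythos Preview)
The paper does not actually prove this theorem: Theorem~\ref{987} is stated in the introduction as a known result, with a citation to Hone~\cite{Hone1}, and no argument for it is given anywhere in the text. So there is no ``paper's own proof'' to compare your proposal against.

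That said, your outline is essentially the standard proof from~\cite{Hone1}. The substitution and the reduction via $\sigma(u+v)\sigma(u-v)=\sigma(u)^2\sigma(v)^2(\wp(v)-\wp(u))$ are exactly the right mechanism, and the resulting constraints $\alpha=\sigma(2\kappa)^2/\sigma(\kappa)^8$, $\beta=\alpha(\wp(2\kappa)-\wp(\kappa))$ are correct. You are also right that the substantive content lies in the converse direction---given generic $(\alpha,\beta,t_0,t_1,t_2,t_3)$, produce $(g_2,g_3,\kappa,z_0)$---and that this is where the first integral $H$ and the explicit formulas~\eqref{763098987}--\eqref{7630900988} of the paper (borrowed from~\cite{Swart,Hone1}) do the work. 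Your final remark about a local biholomorphism is the correct spirit, but in the original reference this step is carried out more concretely: one first determines $\wp(\kappa)$ (hence $\kappa$ on the curve) from $\alpha,\beta,H$, then $z_0$ from a ratio such as $t_0t_2/t_1^2$, so if you want a self-contained write-up you should replace the soft inverse-function-theorem language with those explicit identifications.
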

In Theorem \ref{987}, we denote above  by $\sigma(z)=\sigma(z; g_2, g_3)$ a corresponding  Weierstrass sigma function of an elliptic curve
$E : y^2=4x^3-g_2x-g_3$.

For completeness,  we also give well-known explicit expressions for the invariants of the curve $E$: $g_2$ and $g_3$. But first let us define a function
\[
H=\frac{t_nt_{n+3}}{t_{n+1}t_{n+2}}+\alpha\frac{t_{n+1}^2}{t_nt_{n+2}}+\alpha\frac{t_{n+2}^2}{t_{n+1}t_{n+3}}+\beta\frac{t_{n+1}t_{n+2}}{t_nt_{n+3}}.
\]
It can be checked that it is the first integral (translation invariant) of the equation  (\ref{5907784}). Apparently  this integral was first found and used in \cite{Swart}. Since we consider only autonomous equations, when writing any first integral, it is convenient to set $n$ equal to zero.

Now having in hand $H$,  one can write the invariants of the curve $E$. They look
\begin{equation}
g_2=\frac{H^4-8\beta H^2-24\alpha^2 H+16\beta^2}{12\alpha^2}
\label{763098987}
\end{equation}
and
\begin{equation}
g_3=-\frac{H^6-12\beta H^4-36\alpha^2 H^3+48\beta^2 H^2+144\alpha^2\beta H+216\alpha^4-64\beta^3}{216\alpha^3}.
\label{7630900988}
\end{equation}

In what follows we use the notion of a companion  elliptic sequence.  In general, of course, it is worth starting with the fact that an elliptic sequence, which were introduced
by Morgan Ward \cite{Ward}, is a very important concept that arises in the framework of the theory of elliptic curves. A good exposition of the relationship of elliptic sequences and elliptic curves   can be found, for example, in \cite{Swart}. To be more exactly, this sequence is defined by a quadratic recurrence 
\begin{equation}
W_nW_{n+4}=W_2^2W_{n+1}W_{n+3}-W_1W_3W_{n+2}^2,
\label{99092}
\end{equation}
which, as can be easily seen, is a special case of the Somos-4 one. From (\ref{99092}), it follows that $(W_n)$ is an anti-symmetric sequence, that is, $W_{-n}=-W_{n}$. 
In general, no one forbids to consider complex-valued sequences generated by (\ref{99092}), but from the point of view of application, integer sequences play a special role. 
In \cite{Ward}, Morgan Ward proved the following fact. 
\begin{theorem}  \label{786320}
Let $W_0=0$ and  $W_1=1$, while $W_2,\; W_3$ and $W_4$ are three arbitrary integers with the only one condition $W_2 | W_4$, then all other members of this sequence are integers and the following divisibility property holds: $W_n | W_m\;\; \mbox{whenever}\;\; n | m$. The converse is obviously true. 
\end{theorem}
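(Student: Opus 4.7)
The plan is to establish first an auxiliary bilinear identity (Ward's identity)
\begin{equation*}
W_{m+n}W_{m-n}\;=\;W_{m+1}W_{m-1}W_n^2 - W_{n+1}W_{n-1}W_m^2,
\end{equation*}
valid for all $m,n\in\mathbb{Z}$, and to deduce both integrality and divisibility from it. With the normalization $W_0=0$, $W_1=1$ the Somos-4 recurrence (\ref{99092}) is the specialization of this identity to $(m,n)\mapsto(n+2,2)$, and conversely the full identity can be derived from (\ref{99092}) by strong induction on $\max(|m|,|n|)$; alternatively it is the algebraic shadow of the classical three-term Weierstrass $\sigma$-identity underpinning Theorem \ref{987}. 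Antisymmetry $W_{-n}=-W_n$ is used freely.

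For integrality I proceed by strong induction on $n$, the base cases $n\le 4$ being given. Rewriting (\ref{99092}) as $W_{n+4}=(W_2^2 W_{n+1}W_{n+3}-W_3 W_{n+2}^2)/W_n$, the task is to show that $W_n$ divides the numerator. When $n=2$ one gets $W_6=W_3\bigl(W_2 W_5-W_4\cdot(W_4/W_2)\bigr)$, and this is the unique point where the hypothesis $W_2\mid W_4$ is used in an essential way; in particular $W_3\mid W_6$. The case $n=3$ is then immediate, since $W_3\mid W_6$ makes the numerator $W_2^2 W_4 W_6-W_3 W_5^2$ divisible by $W_3$. For general $n\ge 4$ the bilinear identity, specialized to $(m,n)\mapsto(n+2,n)$, exhibits the numerator $W_2^2 W_{n+1}W_{n+3}-W_3 W_{n+2}^2$ as a $W_n$-multiple modulo terms controlled by already-integral $W_j$, closing the induction.

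For divisibility I fix $n\ge 1$ and induct on $k\ge 0$ to prove $W_n\mid W_{kn}$; the cases $k=0,1$ are trivial. Substituting $m=kn$ in the bilinear identity yields
\begin{equation*}
W_{(k+1)n}W_{(k-1)n}\;=\;W_{kn+1}W_{kn-1}W_n^2-W_{n+1}W_{n-1}W_{kn}^2,
\end{equation*}
whose right-hand side is a multiple of $W_n^2$ by the inductive hypothesis, so $W_n^2\mid W_{(k+1)n}W_{(k-1)n}$. Writing $W_{(k-1)n}=W_n\cdot U$ gives $W_n\mid W_{(k+1)n}\cdot U$. The main obstacle---and where I expect the real work of the proof to sit---is extracting $W_n\mid W_{(k+1)n}$ from this: since $U$ may a priori share prime factors with $W_n$, a local analysis is required. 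I would run a parallel induction on the stronger strong-divisibility statement $\gcd(W_m,W_n)=\pm W_{\gcd(m,n)}$; coupled with the $p$-adic valuation pattern forced by the bilinear identity (whereby $v_p(W_{kn})$ depends essentially linearly on $k$ once $p\mid W_n$), this suffices to complete the argument. The converse direction asserted in the theorem is then immediate.
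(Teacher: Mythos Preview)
The paper does not supply its own proof of this statement: Theorem~\ref{786320} is quoted as a result of Morgan Ward, with the reference \cite{Ward} given for the proof and no argument reproduced. So there is nothing in the paper to compare your attempt against.

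That said, your proposal is an outline rather than a proof, and you are candid about this. The overall architecture---derive the full bilinear identity $W_{m+n}W_{m-n}=W_{m+1}W_{m-1}W_n^2-W_{n+1}W_{n-1}W_m^2$ from the fourth-order recurrence, then use it to drive a strong induction for integrality and a second induction on $k$ for $W_n\mid W_{kn}$---is indeed the shape of Ward's original argument. But several load-bearing steps are left as assertions. First, the reduction of the general identity to (\ref{99092}) ``by strong induction on $\max(|m|,|n|)$'' is nontrivial and is precisely where Ward spends most of his effort; it does not follow in one line. Second, your integrality step for general $n\ge4$ (``exhibits the numerator \ldots\ as a $W_n$-multiple modulo terms controlled by already-integral $W_j$'') is not spelled out, and the specialization $(m,n)\mapsto(n+2,n)$ you suggest does not produce the needed congruence directly. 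Third---and you say this yourself---the divisibility induction stalls at $W_n\mid W_{(k+1)n}\cdot U$ with $U=W_{(k-1)n}/W_n$, and your proposed fix (a parallel strong-divisibility induction plus a $p$-adic valuation argument) is only sketched. These are exactly the places where the real work of Ward's memoir lies; to turn your outline into a proof you would need to carry them out in full.
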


Well, now goes back to the companion  elliptic sequence associated with any sequence defined by the Somos-4 equation (\ref{5907784}). It  is calculated as follows (see, for example, \cite{Hone2}). The first five of $W_n$'s  are 
\begin{equation}
W_0=0,\; W_1=1,\;  W_2=\sqrt\alpha,\; W_3=-\beta,\; W_4=-\mathcal{I}\sqrt\alpha,
\label{70000004}
\end{equation}
where $\mathcal{I}=\alpha^2+\beta H$. All other terms of this  sequence are defined by a recurrence (\ref{99092}). For reference, here are a few more terms of the sequence:
\[
W_5=-\mathcal{J},\; W_6=\beta\left(\mathcal{I}^2+\mathcal{J}\right) \sqrt\alpha ,\; W_7=\alpha^2 \mathcal{I}^3+\beta^3\mathcal{J},\; W_8=\left(\mathcal{J}^2-\beta^3\left(\mathcal{I}^2+\mathcal{J}\right)\right)\mathcal{I}\sqrt\alpha,\;
\]
\begin{equation}
W_9=-\beta\left(\alpha^2 \mathcal{I}^3\left(\mathcal{I}^2+\mathcal{J}\right)+\mathcal{J}^3\right),\ldots
\label{7764094}
\end{equation}
where $\mathcal{J}=\alpha^2\mathcal{I}-\beta^3$.

This is essentially an algebraic definition of companion elliptic sequence but we will also need its analytic definition. Attached to Theorem \ref{987}, the terms of the sequence $(W_n)$ are given as
\begin{equation}
W_n=\frac{\sigma(n\kappa)}{\sigma(\kappa)^{n^2}}.
\label{77766}
\end{equation}
where $\sigma(z)=\sigma(z; g_2, g_3)$ is the same  Weierstrass sigma function as in Theorem \ref{987}.

One of our aims of the paper is to show an infinite set of identities connecting the sequences $(t_n)$ and $(W_n)$.
In the next section, we consider Vajda's identity and its generalizations for a  Lukas sequence. 
Then we prove the validity of a three-term  four-linear identity for any Lucas sequence which makes it look very much like an elliptic sequence.  
In Section \ref{44444}, we prove Theorem \ref{850987}  which says  something like this: 1) any second-order linear sequence is a particular solution of the Somos-4 equation with suitable coefficients; 2) any second-order linear sequence satisfy   the Gale-Robinson recurrence (\ref{776655}) with coefficients that are uniquely expressed in terms of   Lucas polynomials. 
The Lucas sequence, or more precisely, its tweaked version, plays the role of the companion elliptic sequence $(W_n)$ for general linear sequence  in the sense described above.  Ultimately, this  gives us an example of a couple $(t_n, W_n)$, where $(t_n)$  is a solution of the Somos-4 equation, while $(W_n)$ is  the companion elliptic sequence.
And then we ask the following question: well,  the Gale-Robinson equation is satisfied by any second-order linear sequence, but what about the general solution of the Somos-4 equation? And as it is logical to assume, yes, such relation, that involve the companion elliptic sequence, are also true. This statement is formulated as Theorem \ref{77651}. As a corollary of this theorem, we obtain the following property of the solutions of the Somos-4 equation. Given any Somos-4 sequence $(t_n)$ select a subsequence $(t_{dn+r})$, that is,  subscripts  belong to an arbitrary arithmetic progression. Any such sequence is again a solution of the Somos-4 equation. In Section \ref{666666}, we consider   three-term Somos-$N$ equations. 
We show a theorem that provide a relationship between Somos-$N$ equation and some discrete equation that play the role of compatible constraints for a Volterra lattice equation
\begin{equation}
\frac{\partial Y_n}{\partial x}=Y_n\left(Y_{n+1}-Y_{n-1}\right).
\label{60}
\end{equation}
In Section \ref{77777}, we present a solution of Cauchy problem for the Volterra lattice (\ref{60}) with initial data  related to the second-order linear sequence.  Finally, in Sections \ref{99999} and \ref{10000000}, we discuss some technical details related to this solution.

\section{Linear sequence of the second order. Vajda's identity}

The practical purpose of this section and some subsequent sections is to show the fact that any  sequence  defined by a second-order linear recurrence 
\begin{equation}
T_{n+2}=PT_{n+1}-QT_n
\label{0921}
\end{equation}
with arbitrary initial data $(T_0, T_1)=(t_0, t_1)$ satisfies the Somos-4 equation (\ref{5907784}) as well as the Gale-Robinson recurrence (\ref{776655}).   Generally speaking, we consider any  complex-valued sequences, although of course we are aware that integer sequences generated by (\ref{0921}) are of greatest interest. Further we show the Vajda's identity for an arbitrary such sequence, and also see how generalizations of this identity lead to some identities relating second-order linear sequences to the Somos-4 recurrence.

In what follows, we  use the fact that
\begin{equation}
T_n=-t_0QD_{n-1}+t_1D_n, 
\label{0921888}
\end{equation}
where $(D_n)_{n\geq 0}$ represents a particular solution to the linear equation (\ref{0921}) with the initial condition $(t_0, t_1)=(0, 1)$. The first few members of this sequence look like this:
\[
D_0=0,\; D_1=1,\; D_2=P,\; D_3=P^2-Q,\; D_4=P\left(P^2-2Q\right),\ldots 
\]
In what follows, we call the sequence $(D_n)_{n\geq 0}$ the Lucas sequence. Of course, this sequence of polynomials in $(P, Q)$ is the source of many well-known number sequences, such as Fibonacci numbers, Mersenne numbers, etc., but we consider this sequence in a slightly different way. 

Further, it will be extremely important for us that   the Lukas sequence $(D_n)_{n\geq 0}$  can be formally  extended   backwards   by 
\begin{equation}
D_{-n}=-\frac{D_n}{Q^n}
\label{5544}
\end{equation}
and we will use this fact repeatedly. Moreover, by (\ref{0921888}) and (\ref{5544}), we have 
\[
T_{-n}=\frac{t_0D_{n+1}-t_1D_n}{Q^n}
\]
that means that any  second-order linear sequence can also be extended backwards.
Throughout the rest of this paper  we consider   only bi-infinite sequences $(X_n)_{n\in\mathbb{Z}}$, where $X$  is $T$ or $D$ or something others.  For simplicity, to denote bi-infinite sequence,  we agree to write simply $(X_n)$.

Proof of the following lemmas are simple and standard and are given here mainly for completeness.
\begin{lemma}
For the Lukas sequence $(D_n)$, the convolution identity 
\begin{equation}
D_{n+p}=-QD_{p-1}D_{n}+D_{p}D_{n+1},
\label{60967326}
\end{equation}
for any pair $(p, n)\in\mathbb{Z}^2$, is valid.
\end{lemma}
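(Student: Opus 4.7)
My plan is to prove the convolution identity by two-sided induction on $p$ with $n$ held fixed as an arbitrary integer, leveraging the fact that the extension rule (\ref{5544}) makes the linear recurrence $D_{k+1}=PD_k-QD_{k-1}$ valid for \emph{all} $k\in\mathbb{Z}$. A direct check shows that $D_{-n+1}=-D_{n-1}/Q^{n-1}$ agrees with $PD_{-n}-QD_{-n-1}=-PD_n/Q^n+D_{n+1}/Q^n$, since $D_{n+1}-PD_n=-QD_{n-1}$; so the recurrence propagates unambiguously in both directions.

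The base cases are $p=0$ and $p=1$. At $p=1$ the right-hand side is $-Q\cdot 0\cdot D_n+1\cdot D_{n+1}=D_{n+1}$, matching $D_{n+1}$ on the left. At $p=0$ one uses $D_{-1}=-D_1/Q=-1/Q$ from (\ref{5544}), giving right-hand side $-Q(-1/Q)D_n+0\cdot D_{n+1}=D_n=D_{n+0}$. With two consecutive values in hand, the induction can proceed in both directions.

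For the forward step, assume the identity for indices $p-1$ and $p$. Apply the recurrence in the shifted variable, $D_{n+p+1}=PD_{n+p}-QD_{n+p-1}$, and substitute:
\[
D_{n+p+1}=P\bigl(-QD_{p-1}D_n+D_pD_{n+1}\bigr)-Q\bigl(-QD_{p-2}D_n+D_{p-1}D_{n+1}\bigr).
\]
Grouping the coefficient of $D_n$ yields $-Q(PD_{p-1}-QD_{p-2})=-QD_p$, and grouping the coefficient of $D_{n+1}$ yields $PD_p-QD_{p-1}=D_{p+1}$, giving exactly the claim at $p+1$. The backward step is entirely analogous, solving the recurrence as $D_{p-2}=(PD_{p-1}-D_p)/Q$ and running the same collection of terms.

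I do not expect any genuine obstacle: the entire argument is bookkeeping once the recurrence is known to hold for negative indices. A second, even shorter route, if preferred, is the Binet-style closed form: if $\alpha,\beta$ are the roots of $x^2-Px+Q=0$ then $D_n=(\alpha^n-\beta^n)/(\alpha-\beta)$ for all $n\in\mathbb{Z}$, and expanding $-QD_{p-1}D_n+D_pD_{n+1}$ with $Q=\alpha\beta$ causes the four cross-terms to cancel, leaving $(\alpha-\beta)(\alpha^{p+n}-\beta^{p+n})/(\alpha-\beta)^2=D_{p+n}$. Either route delivers the identity uniformly in $(p,n)\in\mathbb{Z}^2$.
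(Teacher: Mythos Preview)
Your proof is correct and follows essentially the same approach as the paper's: a two-sided induction exploiting the linear recurrence, with the minor cosmetic difference that you induct on $p$ (with base cases $p=0,1$) while the paper inducts on $n$ (with base cases $n=1,2$). The alternative Binet-formula route you sketch is valid as well, though it tacitly assumes the characteristic roots are distinct.
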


\begin{proof}  
Let us suppose that we have proved (\ref{60967326}) for some values $n-1$ and $n$. Then
\begin{eqnarray}
D_{n+1+p}&=&P\left(-QD_{p-1}D_{n}+D_{p}D_{n+1}\right)-Q\left(-QD_{p-1}D_{n-1}+D_{p}D_{n}\right) \nonumber\\
&=&QD_{p-1}\left(-PD_{n}+QD_{n-1}\right)+D_{p}\left(PD_{n+1}-QD_{n}\right) \nonumber\\
&=&-QD_{p-1}D_{n+1}+D_{p}D_{n+2}. \label{6655}
\end{eqnarray}
For $n=1$, relation (\ref{60967326}) becomes 
\begin{equation}
D_{p+1}=-QD_{p-1}+PD_{p}
\label{0000}
\end{equation}
what is valid by definition. In turn, for $n=2$, (\ref{60967326}) becomes 
\begin{eqnarray}
D_{p+2}&=&PD_{p+1}-QD_{p} \nonumber\\
&=&P\left(PD_{p}-QD_{p-1}\right)-QD_{p} \nonumber\\
&=&-QPD_{p-1}+\left(P^2-Q\right)D_{p} \nonumber \\
&=&-QD_{2}D_{p-1}+D_{3}D_{p}. \label{9876500}
\end{eqnarray}
Relations  (\ref{0000}) and (\ref{9876500}) give a basis for the induction, with the help of (\ref{6655}), to prove (\ref{60967326}) for all $n\geq 1$. To prove (\ref{60967326}) for all $n\leq 0$ one can use similar arguments. Therefore  this lemma is proved.  
\end{proof}

In the sequel, we will use the fact that the identity (\ref{60967326}) can be rewritten as
\begin{equation}
Q^{p-1}D_{n-p+1}=D_{p}D_{n}-D_{p-1}D_{n+1}.
\label{673095426}
\end{equation}

The following lemma contains an identity for the Lukas sequence $(D_n)$ that is a generalization of Wajda's identity \cite{Vajda} for the Fibonacci numbers: 
\[
F_{n+p}F_{n+q}-F_{n}F_{n+p+q}=(-1)^nF_{p}F_{q}.
\]  
\begin{lemma} \label{9809}
For the Lukas sequence $(D_n)$, a Vajda's identity 
\begin{equation}
D_{n+p}D_{n+q}-D_{n}D_{n+p+q}=Q^nD_{p}D_{q},
\label{6426}
\end{equation}
for any $(p, q, n)\in\mathbb{Z}^3$, is valid.
\end{lemma}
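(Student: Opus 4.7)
The plan is to apply the convolution identity (\ref{60967326}) and its rearranged form (\ref{673095426}) in tandem, each one twice, to reduce the bilinear expression $D_{n+p}D_{n+q} - D_n D_{n+p+q}$ to the product $Q^n D_p D_q$. The whole argument is bilinear bookkeeping; no new tool beyond the previous lemma is required.

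First I would apply (\ref{60967326}) with the shifts $n\mapsto n+p$ and $p\mapsto q$ in order to peel the index $q$ off the second term, writing $D_{n+p+q} = -Q D_{q-1} D_{n+p} + D_q D_{n+p+1}$. Substituting this into the left-hand side of (\ref{6426}) and collecting the two terms that carry the factor $D_{n+p}$ gives
\begin{equation*}
D_{n+p}D_{n+q} - D_n D_{n+p+q} = D_{n+p}\bigl(D_{n+q} + Q D_{q-1} D_n\bigr) - D_q D_n D_{n+p+1}.
\end{equation*}

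A second application of (\ref{60967326}), this time with only $p\mapsto q$, simplifies the parenthesised factor: from $D_{n+q} = -Q D_{q-1} D_n + D_q D_{n+1}$ one reads off $D_{n+q} + Q D_{q-1} D_n = D_q D_{n+1}$, so the right-hand side factors as $D_q\bigl(D_{n+1}D_{n+p} - D_n D_{n+p+1}\bigr)$.

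For the last step I would invoke the rewritten form (\ref{673095426}) with the substitutions $p\mapsto n+1$ and $n\mapsto n+p$; since $n-p+1$ then specialises to $p$, this yields exactly $D_{n+1}D_{n+p} - D_n D_{n+p+1} = Q^n D_p$, and multiplying by the $D_q$ already extracted delivers $Q^n D_p D_q$. The only place where a small choice has to be made is spotting this specialization of (\ref{673095426}) at the end; everything preceding it is forced as soon as (\ref{60967326}) is used to untangle $D_{n+p+q}$. Validity for all triples $(p,q,n)\in\mathbb{Z}^3$, including negative indices, is automatic from the backward extension (\ref{5544}) recorded earlier in the section.
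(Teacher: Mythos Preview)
Your proof is correct and follows essentially the same route as the paper's: both expand the two ``mixed'' terms via the convolution identity (\ref{60967326}), cancel the matching pieces, and finish with the specialization of (\ref{673095426}). The only cosmetic difference is that the paper peels off the index $p$ (factoring out $D_p$ first and reducing to $D_{n+1}D_{n+q}-D_nD_{n+q+1}=Q^nD_q$), whereas you peel off $q$; since the identity is symmetric in $p$ and $q$, this is the same argument with the labels swapped.
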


\begin{proof}  
By the convolution identity of the form (\ref{60967326}) and (\ref{673095426}) , we have
\begin{eqnarray*}
D_{n+p}D_{n+q}-D_{n}D_{n+p+q}&=&\left(-QD_{p-1}D_{n}+D_{p}D_{n+1}\right)D_{n+q} \\
&&-D_{n}\left(-QD_{p-1}D_{n+q}+D_{p}D_{n+q+1}\right) \\
&=&D_{p}\left(D_{n+1}D_{n+q}-D_nD_{n+q+1}\right) \\
&=&Q^nD_{p}D_q.
\end{eqnarray*}
\end{proof}  
The following lemma is a consequence of the Vajda's identity   (\ref{6426}).
\begin{lemma}
By (\ref{6426}), for the second-order linear sequence $(T_n)$, identity 
\begin{equation}
T_{n+p}T_{n+q}-T_{n}T_{n+p+q}=cQ^{n}D_pD_q,
\label{6400026}
\end{equation}
for any $(p, q, n)\in\mathbb{Z}^3$, is valid, where $c=Qt_0^2-Pt_0t_1+t_1^2$.
\end{lemma}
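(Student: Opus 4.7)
The plan is to substitute the explicit expansion $T_m = -t_0 Q D_{m-1} + t_1 D_m$ (formula~(2.2) in the excerpt) into the left hand side of the claimed identity and reduce everything to bilinear expressions in the Lukas sequence, then apply the already-proved Vajda identity (6.26) repeatedly.

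Expanding the product $T_{n+p}T_{n+q}-T_n T_{n+p+q}$ yields three groups of terms, organized by the coefficients $t_0^2 Q^2$, $-t_0 t_1 Q$, and $t_1^2$. The $t_1^2$ group is exactly $D_{n+p}D_{n+q}-D_n D_{n+p+q}$, which by Lemma~\ref{9809} equals $Q^n D_p D_q$. The $t_0^2 Q^2$ group is $D_{n+p-1}D_{n+q-1}-D_{n-1}D_{n+p+q-1}$, which is the same identity with $n$ shifted to $n-1$, giving $Q^{n-1}D_p D_q$. So these two groups together contribute
\[
(t_0^2 Q + t_1^2)\, Q^n D_p D_q.
\]

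The main bookkeeping step is the cross group with coefficient $-t_0 t_1 Q$, namely
\[
D_{n+p-1}D_{n+q}+D_{n+p}D_{n+q-1}-D_{n-1}D_{n+p+q}-D_n D_{n+p+q-1}.
\]
I would split this into the two pairs $\bigl(D_{n+p-1}D_{n+q}-D_{n-1}D_{n+p+q}\bigr)$ and $\bigl(D_{n+p}D_{n+q-1}-D_n D_{n+p+q-1}\bigr)$, applying Vajda's identity to each: the first (with base $n-1$ and offsets $p,q+1$) gives $Q^{n-1}D_p D_{q+1}$, while the second (with base $n$ and offsets $p,q-1$) gives $Q^n D_p D_{q-1}$. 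Their sum factors as $Q^{n-1}D_p\bigl(D_{q+1}+Q D_{q-1}\bigr)=PQ^{n-1}D_p D_q$ by the defining recurrence (2.1) for $D$. Hence the cross group contributes $-P t_0 t_1\, Q^n D_p D_q$.

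Collecting, the right hand side becomes $(Qt_0^2-Pt_0 t_1+t_1^2)\,Q^n D_p D_q = c\,Q^n D_p D_q$, exactly as claimed. The only subtlety is to make sure the computation is valid for all $(p,q,n)\in\mathbb{Z}^3$; this is automatic because Lemma~\ref{9809} is already established on all of $\mathbb{Z}^3$ and the recurrence~(2.1) extends to negative indices via~(2.5). I expect the sole obstacle to be carrying out the cross-term simplification cleanly, since it requires choosing the right pairing and the two distinct shifts of Vajda's identity; the rest is mechanical.
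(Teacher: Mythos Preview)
Your proof is correct and follows essentially the same route as the paper: substitute $T_m=-t_0QD_{m-1}+t_1D_m$, dispatch the $t_0^2$ and $t_1^2$ groups by direct appeal to Vajda, and reduce the cross group to $PQ^{n-1}D_pD_q$ via two applications of Vajda plus the defining recurrence. The only cosmetic difference is the choice of pairing in the cross term---the paper groups so as to factor out $D_q$, obtaining $Q^nD_{p-1}D_q+Q^{n-1}D_{p+1}D_q$, whereas you factor out $D_p$---but the two computations are symmetric and equally short.
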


\begin{proof}  
This is proved by direct calculation. A little difficulty can only be to prove the identity 
\[
\left(D_{n+p-1}D_{n+q}+D_{n+p}D_{n+q-1}\right)-\left(D_{n-1}D_{n+p+q}+D_{n}D_{n+p+q-1}\right)=PQ^{n-1}D_{p}D_{q}.
\]
But, having (\ref{6426}) in hand, we can rewrite the left-hand side of this relation as
$
Q^nD_{p-1}D_q+Q^{n-1}D_{p+1}D_q
$
and after that the identity becomes obvious by virtue of the definition of the Lukas sequence.
\end{proof}

\section{Four-linear identity associated  to the Vajda's identity}


Let $n=a-c,\; p=b+c,\; q=c-b$. With (\ref{5544}) we can reformulate  Lemma \ref{9809} as
\begin{lemma} \label{65565}
For any triple of numbers $(a, b, c)\in\mathbb{Z}^3$ the Lukas sequence $(D_n)$ satisfies the identity
\begin{equation}
\frac{D_{a-b}D_{a+b}}{Q^a}+\frac{D_{b-c}D_{b+c}}{Q^b}+\frac{D_{c-a}D_{c+a}}{Q^c}=0.
\label{6428756}
\end{equation}
\end{lemma}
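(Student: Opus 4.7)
The plan is to specialize Vajda's identity (\ref{6426}) with the substitution $n=a-c$, $p=b+c$, $q=c-b$ and then use the reflection rule (\ref{5544}) to redistribute the powers of $Q$ into the cyclic form desired by the statement.

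First I would compute the shifted indices under this substitution: $n+p=a+b$, $n+q=a-b$, and $n+p+q=a+c$. Thus Vajda's identity becomes
\begin{equation*}
D_{a+b}D_{a-b}-D_{a-c}D_{a+c}=Q^{a-c}D_{b+c}D_{c-b}.
\end{equation*}
Next, using the formal extension rule $D_{-m}=-D_m/Q^m$, I would replace $D_{c-b}=-D_{b-c}/Q^{b-c}$, so that the right-hand side becomes $-Q^{a-b}D_{b+c}D_{b-c}$. Transposing and dividing through by $Q^a$ gives
\begin{equation*}
\frac{D_{a-b}D_{a+b}}{Q^a}-\frac{D_{a-c}D_{a+c}}{Q^a}+\frac{D_{b-c}D_{b+c}}{Q^b}=0.
\end{equation*}

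Finally, a second application of (\ref{5544}) in the form $D_{c-a}D_{c+a}=-D_{a-c}D_{a+c}/Q^{a-c}$ shows that $-D_{a-c}D_{a+c}/Q^a = D_{c-a}D_{c+a}/Q^c$, which turns the middle term into the cyclically symmetric one and completes the identity.

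There is no real obstacle here beyond careful bookkeeping of the exponents of $Q$; the whole proof is algebraic manipulation on top of Lemma \ref{9809}. The only subtlety worth flagging is that the reflection rule (\ref{5544}) is used twice with different shifts, and one must be careful that each appearance produces the correct power of $Q$ so that the three terms end up with denominators $Q^a$, $Q^b$, $Q^c$ respectively.
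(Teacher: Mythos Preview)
Your proof is correct and follows exactly the approach the paper indicates: the paper itself merely says ``Let $n=a-c$, $p=b+c$, $q=c-b$. With (\ref{5544}) we can reformulate Lemma \ref{9809}'' and then states the lemma without further detail. You have faithfully carried out that computation, with the two applications of the reflection rule (\ref{5544}) and the bookkeeping of $Q$-powers done correctly.
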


Let us underline  that  relation (\ref{6428756}) represents the Wajda's identity (\ref{6426}) but only  in symmetrical form. Looking at the symmetrical form of Wajda's identity (\ref{6428756}), only the blind will not notice how it can be generalized. So, let us  formulate a corresponding proposition that deviates somewhat from the main content and can rather be considered as a remark.
\begin{proposition} 
For any   $(a_1,\ldots, a_d)\in\mathbb{Z}^d$, where $d\geq 2$, the Lukas sequence $(D_n)$ satisfies the identity
\begin{equation}
\sum_{j=1}^d\frac{D_{a_j-a_{j+1}}D_{a_j+a_{j+1}}}{Q^{a_j}}=0,\;\; a_{d+1}=a_1.
\label{3421}
\end{equation}
\end{proposition}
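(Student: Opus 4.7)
The plan is to exhibit each summand as a telescoping difference in a single integer variable. That is, I look for a function $g\colon\mathbb{Z}\to\mathbb{Z}[P,Q,Q^{-1}]$ such that
\[
\frac{D_{a-b}D_{a+b}}{Q^{a}}=g(a)-g(b).
\]
Once this is established, the proposition is immediate: summing over $j=1,\dots,d$ and using the cyclicity $a_{d+1}=a_1$, the total collapses to
\[
\sum_{j=1}^{d}\bigl(g(a_j)-g(a_{j+1})\bigr)=0.
\]

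To find $g$, I would specialize Vajda's identity (Lemma \ref{9809}) to the triple $(n,p,q)=(-a,\,a-b,\,a+b)$, so that $n+p=-b$, $n+q=b$ and $n+p+q=a$. The left-hand side of (\ref{6426}) then reads $D_{-b}D_{b}-D_{-a}D_{a}$, and applying the extension rule (\ref{5544}) $D_{-n}=-D_n/Q^{n}$ converts this into $D_a^{2}/Q^{a}-D_b^{2}/Q^{b}$. The right-hand side is $Q^{-a}D_{a-b}D_{a+b}$, i.e.\ exactly $D_{a-b}D_{a+b}/Q^{a}$. So the decomposition holds with $g(a)=D_a^{2}/Q^{a}$.

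There is no real obstacle beyond picking the right specialisation of Vajda's identity; the $d=3$ instance (Lemma \ref{65565}) is precisely the three-term case of this telescoping, and the case $d=2$ is an immediate consistency check, since $f(a_1,a_2)+f(a_2,a_1)=0$ follows at once from the antisymmetry $f(a,b)=-f(b,a)$ forced by the formula $f(a,b)=g(a)-g(b)$. Thus the whole proof reduces to one application of Lemma \ref{9809} followed by a cyclic telescoping sum.
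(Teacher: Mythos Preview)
Your argument is correct. The key observation that $\{a,b\}:=D_{a-b}D_{a+b}/Q^{a}$ can be written as $g(a)-g(b)$ with $g(a)=D_a^{2}/Q^{a}$ is exactly right, and your specialisation $(n,p,q)=(-a,a-b,a+b)$ of Vajda's identity checks out line by line. Once you have this, the cyclic telescoping handles every $d\geq 2$ uniformly.

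This is a genuinely different route from the paper. The paper proceeds by induction on $d$: it first records the skew-symmetry $\{a,b\}=-\{b,a\}$ and takes the $d=3$ case (Lemma~\ref{65565}) as the induction base; then, given the identity for $d$, it passes to $d+1$ by inserting and removing the term $\{a_d,a_1\}$, which splits the $(d+1)$-term sum into the assumed $d$-term identity plus a three-term residue that is again Lemma~\ref{65565}. Your approach is more structural: by exhibiting $\{a,b\}$ as a coboundary $g(a)-g(b)$ you explain \emph{why} the cyclic sum vanishes, obtain the skew-symmetry for free, and avoid induction entirely. The paper's argument, on the other hand, makes explicit that every $d$-term identity is built from the basic three-term relation, which fits its later emphasis on the $d=3$ case and the four-linear identity attached to it.
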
 
\begin{proof}
We prove this proposition by induction. Let us denote, for convenience, the symbol $\{a, b\}=D_{a-b}D_{a+b}/Q^a$. Note that, due to (\ref{5544}), the skew-symmetry relation $\{a, b\}=-\{b, a\}$ holds. Let us suppose we have proved the identity (\ref{3421}) for some value of $d$. Let us write the relation
\[
\sum_{j=1}^{d+1}\{a_j, a_{j+1}\}=0,\;\; a_{d+2}=a_1
\]
and then add and subtract $\{a_d, a_{1}\}$ to it. Taking into account our assumption, the last relation reduces to the relation
\[
\{a_d, a_{d+1}\}-\{a_d, a_{1}\}+\{a_{d+1}, a_{1}\}=0
\] 
and it, in turn, is an identity due to the skew-symmetry of $\{a, b\}$ and (\ref{6428756}). The induction base is given  by (\ref{6428756}). 
\end{proof}


Next we would like to show necessary for us   three-term four-linear identity associated in a sense to the Vajda's identity (\ref{6428756}). 
\begin{lemma} \label{7765}
By (\ref{6428756}), for any $(a_1, a_2, a_3, b)\in\mathbb{Z}^4$, the sequence $(D_n)$ satisfies the identity
\begin{equation}
\frac{D_{b-a_3}D_{b+a_3}D_{a_1-a_2}D_{a_1+a_2}}{Q^{a_1}}+\frac{D_{b-a_1}D_{b+a_1}D_{a_2-a_3}D_{a_2+a_3}}{Q^{a_2}} +\frac{D_{b-a_2}D_{b+a_2}D_{a_3-a_1}D_{a_3+a_1}}{Q^{a_3}}=0.
\label{34211111}
\end{equation}
\end{lemma}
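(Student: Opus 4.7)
The plan is to use the symmetric Vajda identity (\ref{6428756}) to express each pair bracket $D_{a_i-a_j}D_{a_i+a_j}/Q^{a_i}$ as a difference of two ``anchored'' quantities indexed by $b$, after which (\ref{34211111}) collapses to a trivial three-term algebraic identity in three variables.

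First I would adopt the shorthand $\{a,b\} := D_{a-b}D_{a+b}/Q^a$ already introduced in the proof of the previous proposition, and note the key observation that the prefactors $D_{b-a_i}D_{b+a_i}$ appearing in (\ref{34211111}) are nothing but $Q^b\{b,a_i\}$. Consequently, after pulling out the common factor $Q^b$, the identity to be proved is equivalent to
\[
\{b,a_3\}\{a_1,a_2\}+\{b,a_1\}\{a_2,a_3\}+\{b,a_2\}\{a_3,a_1\}=0.
\]
Next, applying Lemma \ref{65565} to each of the triples $(b,a_i,a_j)$ with $(i,j)=(1,2),(2,3),(3,1)$, together with the skew-symmetry $\{a,b\}=-\{b,a\}$, yields the coboundary relation
\[
\{a_i,a_j\}=\{b,a_j\}-\{b,a_i\}.
\]
Setting $u_i:=\{b,a_i\}$ and substituting, the left-hand side becomes
\[
u_3(u_2-u_1)+u_1(u_3-u_2)+u_2(u_1-u_3),
\]
which vanishes term-by-term by inspection.

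There is no serious obstacle to overcome. The whole content of the proof is the cohomological observation, already encoded in Lemma \ref{65565}, that the antisymmetric bracket $\{a,b\}$ is a coboundary $u_a-u_b$ with respect to any fixed base index; once this has been used, any cyclic three-term bilinear expression of the shape (\ref{34211111}) vanishes for purely algebraic reasons. The only care needed is that Lemma \ref{65565} is valid on all of $\mathbb{Z}^3$, which allows us to apply it freely to the auxiliary triples $(b,a_i,a_j)$ regardless of the signs of the differences involved.
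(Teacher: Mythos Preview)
Your proof is correct and is essentially the same as the paper's: both rewrite the identity in terms of the bracket $\{a,b\}=D_{a-b}D_{a+b}/Q^a$ and then reduce it to a trivial algebraic identity using the three-term relation $\{a,b\}+\{b,c\}+\{c,a\}=0$ and the skew-symmetry $\{a,b\}=-\{b,a\}$. The only cosmetic difference is that the paper subtracts the zero term $\{b,a_3\}(\{a_1,a_2\}+\{a_2,a_3\}+\{a_3,a_1\})$ and then applies the three-term relation once more, whereas you invoke the coboundary form $\{a_i,a_j\}=\{b,a_j\}-\{b,a_i\}$ for all three brackets at once; the underlying manipulation is identical.
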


\begin{proof}
Using symbol $\{a, b\}$, we rewrite relation (\ref{34211111}) that need to be proven as
\begin{equation}
\{b, a_3\}\{a_1, a_2\}+\{b, a_1\}\{a_2, a_3\}+\{b, a_2\}\{a_3, a_1\}=0.
\label{3420011}
\end{equation}
We already have, for any $(a, b, c)\in\mathbb{Z}^3$,  two identities
\begin{equation}
\{a, b\}+\{b, a\}=0\;\;\mbox{and}\;\; \{a, b\}+\{b, c\}+\{c, a\}=0.
\label{342761}
\end{equation}
Subtracting from  (\ref{3420011}) the relation
\[
\{b, a_3\}\left(\{a_1, a_2\}+\{a_2, a_3\}+\{a_3, a_1\}\right)=0,
\]
we get the following one:
\[
\{a_2, a_3\}\left(\{b, a_1\}-\{b, a_3\}\right)+\{a_3, a_1\}\left(\{b, a_2\}-\{b, a_3\}\right)=0.
\]
which,  we can rewrite, by virtue of (\ref{342761}), as
\[
\{a_2, a_3\}\{a_3, a_1\}+\{a_3, a_1\}\{a_3, a_2\}=0
\]
that is obviously an identity. 
\end{proof}
\begin{remark}
One sees that the proved identity (\ref{34211111}) is, in a certain sense, attached to the identity (\ref{6428756}), which in turn  is a special case of identity (\ref{3421}) for $d=3$. There are in fact  an infinite number of identities of the form
\begin{equation}
\sum_{j=1}^dD_{b-a_{\lambda_j}}D_{b+a_{\lambda_j}}\frac{D_{a_j-a_{j+1}}D_{a_j+a_{j+1}}}{Q^{a_j}}=0,\;\;a_{d+1}=a_1,
\label{31}
\end{equation}
where $\lambda_j\in(1,\ldots, d)$, attached to identities (\ref{3421}) with $d\geq 3$. The proof of each  is similar to the proof of  Proposition \ref{7765}. As we already found out, we have solution $(\lambda_1, \lambda_2, \lambda_3)=(3, 1, 2)$.  
The following problem appears here: some suitable sets can be  equivalent. Indeed, if $(\lambda_1,\ldots, \lambda_d)$ is a suitable set, then so is $(\lambda_d+1, \lambda_1+1,\ldots, \lambda_{d-1}+1)$. One need to keep in mind that if appears $d+1$, it must be replaced by 1. In the case of $d=3$, the set $(\lambda_1, \lambda_2, \lambda_3)=(3, 1, 2)$ by  a permutation with a shift is translated into itself. In the case $d=4$, we have four equivalent sets, namely, $(4, 4, 1, 3)\rightarrow (4, 1, 1, 2)\rightarrow (3, 1, 2, 2)\rightarrow (3, 4, 2, 3)$, and therefore in the case $d=4$,  we have only one identity of the form (\ref{31}) with $(\lambda_1, \lambda_2, \lambda_3, \lambda_4)=(4, 4, 1, 3)$. One can pose the task of listing identities of the form (\ref{31}), but the discussion of this problem will take us far from the main content of the paper.
\end{remark}

\section{Linear second-order sequence as a particular solution of the Somos-4 equation}
\label{44444}


Let $n=a_1-b,\; u=a_2-b,\; m=a_3-b,\; s=2b$. With these new variables,  the proved identity (\ref{34211111}) can be rewritten in the following form:
\begin{equation}
Q^uD_{m}D_{m+s}D_{n-u}D_{n+u+s}+Q^mD_{n}D_{n+s}D_{u-m}D_{u+m+s} +Q^nD_{u}D_{u+s}D_{m-n}D_{m+n+s}=0. 
\label{870009}
\end{equation}
\begin{remark}
If we put $s=0$ and $r=1$, then (\ref{870009})  reduced to the form
\begin{equation}
Q^{q-1}D_{n-u}D_{n+u}=D_{q}^2D_{n-1}D_{n+1}-D_{u-1}D_{u+1}D_{n}^2. 
\label{809}
\end{equation}
This identity has been presented and used in \cite{Lukas} by \'Eduard Lukas (see also \cite{Bell}). 
\end{remark}

With (\ref{5544}) let us slightly correct the identity (\ref{870009}) to have it in the following form:
\begin{equation}
Q^mD_{u}D_{u+s}D_{n-m}D_{n+m+s}-Q^uD_{m}D_{m+s}D_{n-u}D_{n+u+s} +Q^uD_{m-u}D_{u+m+s}D_{n}D_{n+s}=0. 
\label{09}
\end{equation}

\begin{lemma}
By (\ref{09}), identity
\begin{equation}
Q^mD_{u}D_{u+s}T_{n-m}T_{n+m+s}-Q^uD_{m}D_{m+s}T_{n-u}T_{n+u+s} +Q^uD_{m-u}D_{u+m+s}T_{n}T_{n+s}=0. 
\label{59084}
\end{equation}
holds.
\end{lemma}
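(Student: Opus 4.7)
My plan is to reduce (\ref{59084}) to the algebraic identity (\ref{09}) (or equivalently, to the $D$-Vajda identity (\ref{6426}) that underlies it) by using the $T$-sequence analogue of Vajda's identity, namely (\ref{6400026}). The crucial observation is that in all three $TT$-products appearing in (\ref{59084}) --- $T_{n-m}T_{n+m+s}$, $T_{n-u}T_{n+u+s}$, and $T_n T_{n+s}$ --- the sum of the two indices equals $2n+s$. So each can be compared against the common reference $T_n T_{n+s}$ via (\ref{6400026}).

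Carrying this out, I apply (\ref{6400026}) with $(p,q)=(-m,m+s)$ and with $(p,q)=(-u,u+s)$, invoking the backward-extension rule (\ref{5544}) to simplify $D_{-m}D_{m+s}=-D_m D_{m+s}/Q^m$ and similarly for $u$. This yields
\begin{equation*}
T_{n-m}T_{n+m+s}=T_n T_{n+s}-cQ^{n-m}D_m D_{m+s},\qquad T_{n-u}T_{n+u+s}=T_n T_{n+s}-cQ^{n-u}D_u D_{u+s}.
\end{equation*}
Substituting these into the LHS of (\ref{59084}), the two $c$-correction terms contribute $-cQ^n D_u D_{u+s} D_m D_{m+s} + cQ^n D_m D_{m+s}D_u D_{u+s}=0$ and cancel trivially. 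What is left is $T_n T_{n+s}$ multiplied by the bracket
\begin{equation*}
Q^m D_u D_{u+s}-Q^u D_m D_{m+s}+Q^u D_{m-u}D_{u+m+s}.
\end{equation*}

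Finally, I would show this bracket vanishes via the $D$-Vajda identity (\ref{6426}) applied with $n\to m-u$, $p\to u$, $q\to u+s$: this produces $D_m D_{m+s}-D_{m-u}D_{u+m+s}=Q^{m-u}D_u D_{u+s}$, and multiplying both sides by $Q^u$ rearranges to precisely the desired vanishing. No serious obstacle is expected; the argument is a clean two-step reduction --- first trading $T$-products for $D$-products via $T$-Vajda, then a one-line application of $D$-Vajda --- and the cancellation structure is forced by the common index-sum $2n+s$ shared by all three terms of (\ref{59084}).
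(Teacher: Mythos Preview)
Your proof is correct and is in fact cleaner than the paper's. The paper proceeds by substituting the explicit expansion $T_n=-t_0QD_{n-1}+t_1D_n$ from (\ref{0921888}) into (\ref{59084}) and collecting coefficients of $t_1^2$, $t_0^2$, and $t_0t_1$ separately; the square terms reduce immediately to (\ref{09}) (with a shift for $t_0^2$), but the cross-term $t_0t_1$ produces a nontrivial identity (\ref{7765498}) that the paper splits further into two pieces (\ref{094532}) and (\ref{8876590}), each proved via the $D$-Vajda identity. By contrast, you exploit the already-established $T$-Vajda identity (\ref{6400026}), which has already absorbed all dependence on $(t_0,t_1)$ into the single constant $c$; this lets you compare each $TT$-product against the common reference $T_nT_{n+s}$, after which the $c$-terms cancel trivially and the remaining bracket is a one-line instance of $D$-Vajda. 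Your route buys a shorter and more transparent argument and highlights the ``common index-sum $2n+s$'' structure; the paper's route is more self-contained in that it does not rely on (\ref{6400026}), but at the cost of essentially re-proving it inside the argument.
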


\begin{proof}
This lemma can be  proven by direct calculation. Let us substitute  (\ref{0921888})  into (\ref{59084}).  It is obvious that collecting terms at  $t_1^2$ we get the proven identity (\ref{09}), while
at $t_0^2$, we get again this  identity but with shifted $n$. It remains  to see what we have at $t_0t_1$. Only at this stage there are some difficulties. Namely we have the following relation:
\begin{eqnarray}
&& Q^mD_{u}D_{u+s}\left(D_{n-m}D_{n+m+s-1}+D_{n-m-1}D_{n+m+s}\right) \nonumber\\[0.2cm]
&&\;\;\;\;\;\;\;\;\;\;\;\;\;\;\;\;\;\;\;-Q^uD_{m}D_{m+s}\left(D_{n-u}D_{n+u+s-1}+D_{n-u-1}D_{n+u+s}\right) \nonumber\\[0.2cm]
&&\;\;\;\;\;\;\;\;\;\;\;\;\;\;\;\;\;\;\;\;\;\;\;\;\;\;\;\;\;\;\;\;\; +Q^uD_{m-u}D_{u+m+s}\left(D_{n}D_{n+s-1}+D_{n-1}D_{n+s}\right)=0 
\label{7765498}
\end{eqnarray}
that needs a proof. Let us first prove that
\begin{eqnarray}
&& Q^mD_{u}D_{u+s}D_{n-m}D_{n+m+s-1} -Q^uD_{m}D_{m+s}D_{n-u}D_{n+u+s-1}+Q^uD_{m-u}D_{u+m+s}D_{n}D_{n+s-1}\nonumber \\ [0.2cm]
&&\;\;\;\;\;\;\;\;\;\;\;\;\;\;\;\;\;\;\;\;\;\;\;\;\;\;\;\;\;\;\;\;\;\;\;\;\;=-Q^{n+u+s-1}D_uD_mD_{m-u}.
\label{094532}
\end{eqnarray}
By Vajda's identity (\ref{6426}), we can rewrite the left-hand side of relation  (\ref{094532}) as 
\begin{eqnarray*}
LHS(\ref{094532})&=&\left(Q^mD_{u}D_{u+s} -Q^uD_{m}D_{m+s}+Q^uD_{m-u}D_{u+m+s}\right)D_{n}D_{n+s-1} \\[0.2cm]
&&+Q^{m+n}D_{u}D_{u+s}D_{-m}D_{m+s-1}-Q^{u+n}D_{m}D_{m+s}D_{-u}D_{u+s-1} \\[0.2cm]
&=&Q^{m+n}D_{u}D_{u+s}D_{-m}D_{m+s-1}-Q^{u+n}D_{m}D_{m+s}D_{-u}D_{u+s-1} \\[0.2cm]
&=&Q^nD_uD_m\left(D_{m+s}D_{u+s-1}-D_{m+s-1}D_{u+s}\right)
\end{eqnarray*}
Here we take into account that, due to Wajda's identity (\ref{6426}), the sum in parentheses at $D_{n}D_{n+s-1}$ is equal to zero.  With the identity
\[
D_{m+s}D_{u+s-1}-D_{m+s-1}D_{u+s}=Q^{m+s-1}D_{u-m},
\]
that follows from (\ref{673095426}), we get  (\ref{094532}). 

It remains to prove the identity
\begin{eqnarray}
&& Q^mD_{u}D_{u+s}D_{n-m-1}D_{n+m+s} -Q^uD_{m}D_{m+s}D_{n-u-1}D_{n+u+s}
+Q^uD_{m-u}D_{u+m+s}D_{n-1}D_{n+s}
\nonumber \\ [0.3cm]
&&\;\;\;\;\;\;\;\;\;\;\;\;\;\;\;\;\;\;\;\;\;\;\;\;\;\;\;\;\;\;\;\;\;\;\;\;\; =Q^{n+u+s-1}D_uD_mD_{m-u}, 
\label{8876590}
\end{eqnarray}
but this is done by analogy with the proof of the identity (\ref{094532}). And now summing the left and right sides of the identities (\ref{094532}) and (\ref{8876590}), we get  (\ref{7765498}).
\end{proof}


Putting  $m=2$, $u=1$ and $s=0$, in the  identity (\ref{59084}), we obtain the fact that the second-order linear sequence $(T_n)$ satisfies an identity
\begin{equation}
T_{n}T_{n+4}=\frac{P^2}{Q}T_{n+1}T_{n+3}-\frac{P^2-Q}{Q}T_{n+2}^2
\label{590800084}
\end{equation}
which, in turn, means that this  sequence  is a particular solution of the  Somos-4 recurrence (\ref{5907784}) with coefficients
\begin{equation}
\alpha=\frac{P^2}{Q},\; \beta=-\frac{P^2-Q}{Q}
\label{65420}
\end{equation} 
and initial data 
\begin{equation}
(t_0, t_1, t_2, t_3)=\left(t_0, t_1, -t_0Q+t_1P, -t_0QP+t_1\left(P^2-Q\right)\right).
\label{65765090}
\end{equation} 
Now we would like to see what companion elliptic sequence corresponding to the sequence $(T_n)$ is. Direct calculation gives
\[
H=\frac{P^2+2Q}{Q}\;\; \mbox{and}\;\; \mathcal{I}=\alpha^2+\beta H=-\frac{P^2-2Q}{Q}.
\]
Then, by (\ref{70000004}), we have
\[
W_2=\sqrt \alpha=\frac{P}{\sqrt Q},\;\; W_3=-\beta=\frac{P^2-Q}{Q},\; W_4=-\mathcal{I}\sqrt \alpha=\frac{P\left(P^2-2Q\right)}{\sqrt[3]{Q}}. 
\]
One sees that for  $n$, from zero to four, 
\begin{equation}
W_n=\frac{D_n}{Q^{(n-1)/2}}.
\label{763098}
\end{equation}
Inspired by this we substitute $D_n=Q^{(n-1)/2}W_n$ into (\ref{870009}), to get the following relation:
\begin{equation}
W_{m}W_{m+s}W_{n-u}W_{n+u+s}+W_{n}W_{n+s}W_{u-m}W_{u+m+s} +W_{u}W_{u+s}W_{m-n}W_{m+n+s}=0. 
\label{843009}
\end{equation}
But it is known that this relation is satisfied by the elliptic sequence \cite{Stange}. In particular, putting $s=0$ and $m=1$, we get well-known equation
\[
W_{n-u}W_{n+u}=W_{u}^2W_{n-1}W_{n+1}-W_{u-1}W_{u+1}W_{n}^2
\]
for elliptic sequences \cite{Ward}.

Let us now summarize all of the above in the following theorem.
\begin{theorem}  \label{850987}
Linear sequence of the second order $(T_n)$ is a particular solution of the Somos-4 recurrence (\ref{5907784}) with the coefficients  $\alpha$ and $\beta$ given by (\ref{65420}) and the initial data given by (\ref{65765090}). This sequence satisfies also
\begin{equation}
W_{u}W_{u+s}t_{n-m}t_{n+m+s}-W_{m}W_{m+s}t_{n-u}t_{n+u+s} +W_{m-u}W_{u+m+s}t_{n}t_{n+s}=0,
\label{599964084}
\end{equation}
where $W_n$ is given by (\ref{763098}).
\end{theorem}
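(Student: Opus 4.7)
The plan is to deduce both assertions directly from the already-established identity (\ref{59084}), which is a three-parameter quadratic identity in $T_n$ with coefficients built from the Lucas polynomials $D_n$.

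For assertion (1), I would specialize (\ref{59084}) at $(m,u,s)=(2,1,0)$. Since $D_1=1$, $D_2=P$, $D_3=P^2-Q$, the identity collapses to
\[
Q^2\,T_{n-2}T_{n+2} \;-\; QP^2\,T_{n-1}T_{n+1} \;+\; Q(P^2-Q)\,T_n^2 \;=\; 0,
\]
which, after dividing by $Q$ and shifting $n\mapsto n+2$, is exactly the Somos-4 recurrence (\ref{5907784}) with the coefficients $(\alpha,\beta)$ of (\ref{65420}). The listed initial data then follow by plugging $n=0,1,2,3$ into (\ref{0921888}), giving $T_2 = Pt_1 - Qt_0$ and $T_3 = (P^2-Q)t_1 - PQ\,t_0$, in agreement with (\ref{65765090}).

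For assertion (2), I would substitute $D_k = Q^{(k-1)/2}W_k$ into (\ref{59084}) and bookkeep the $Q$-exponents. The first term carries $Q^{m+(u-1)/2+(u+s-1)/2}$, the second $Q^{u+(m-1)/2+(m+s-1)/2}$, and the third $Q^{u+(m-u-1)/2+(u+m+s-1)/2}$; all three simplify uniformly to $Q^{m+u+s/2-1}$, so after dividing through by this common factor one obtains precisely (\ref{599964084}). To confirm that this $W_n$ is genuinely the companion elliptic sequence of the Somos-4 solution $T_n$, I would compute $H=(P^2+2Q)/Q$ from its defining first-integral formula and $\mathcal{I} = \alpha^2 + \beta H = -(P^2-2Q)/Q$, and then verify that the five initial values prescribed by (\ref{70000004}) match $D_n/Q^{(n-1)/2}$ for $n=0,\ldots,4$; since both sequences satisfy the same recurrence (\ref{99092}), uniqueness forces agreement for all $n$.

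The heavy lifting was done in establishing (\ref{59084}), so I do not anticipate any serious obstacle remaining; both parts of the theorem reduce to a specialization and a uniform $Q$-rescaling. The only cosmetic subtlety is the appearance of $Q^{1/2}$ in $W_2, W_4,\ldots$, which requires fixing a branch of the square root but has no effect on the validity of (\ref{599964084}), since the $Q$-exponents factor out before the branch matters.
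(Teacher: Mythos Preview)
Your proposal is correct and follows essentially the same route as the paper: the text preceding the theorem specializes (\ref{59084}) at $m=2,\ u=1,\ s=0$ to obtain (\ref{590800084}), computes $H=(P^2+2Q)/Q$ and $\mathcal{I}=-(P^2-2Q)/Q$, checks (\ref{763098}) against (\ref{70000004}) for $n=0,\ldots,4$, and then performs the substitution $D_n=Q^{(n-1)/2}W_n$ to clear the $Q$-factors (the paper does this for (\ref{870009}) rather than (\ref{59084}), but your direct substitution into (\ref{59084}) is the more natural move for the stated identity (\ref{599964084}), and your bookkeeping of the common exponent $m+u+s/2-1$ is accurate). The theorem is explicitly introduced as a summary of ``all of the above'', so there is nothing further to add.
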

\begin{remark}
It should be noted that formula (\ref{763098}) can be found on the first page of the Morgan Ward's paper \cite{Ward}.  Together with others, it gives there an example of degenerate  elliptic sequence.
\end{remark}

With the help of (\ref{763098987}) and (\ref{7630900988}), we calculate the invariants of the elliptic curve. In this case, they look like this:
\[
g_2=\frac{(P^2-4Q)^2}{12Q^2}\;\;\mbox{and}\;\; g_3=-\frac{(P^2-4Q)^3}{216 Q^3}.
\]
Moreover, in this case, the degeneracy relation $g_2^3-27g_3^2=0$ is satisfied. In turn, the equation of a degenerate elliptic curve is as follows:
\[
y^2=4\left(x-\frac{P^2-4Q}{12Q}\right)^2\left(x+\frac{P^2-4Q}{6Q}\right).
\]

\section{Theorem that every Somos-4 sequence is a Gale-Robinson one}


So we have proved  Theorem \ref{850987} that any second-order linear sequence is a solution of the Somos-4 recurrence and moreover, it satisfies relation (\ref{599964084}) in which its companion degenerate elliptic sequence is involved. Now it is natural to assume that all this takes place in the non-degenerate case. The following theorem   says that this is so.
\begin{theorem} \label{77651} 
The general solution of the Somos-4 equation (\ref{5907784}) satisfy
\begin{equation}
W_{q-p}W_{N-p-q}t_{n}t_{n+N}=W_qW_{N-q}t_{n+p}t_{n+N-p}-W_{p}W_{N-p}t_{n+q}t_{n+N-q}.
\label{9909984322}
\end{equation}
for any $(N, p, q, n)\in \mathbb{Z}^4$, where $(W_n)$ is the companion  elliptic sequence defined above.
\end{theorem}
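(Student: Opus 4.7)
The plan is to prove Theorem \ref{77651} analytically, by invoking the Weierstrass sigma representations from Theorem \ref{987} and equation (\ref{77766}) to reduce the claim to a classical four-variable sigma identity. Substituting $t_n=AB^n\sigma(z_0+n\kappa)/\sigma(\kappa)^{n^2}$ and $W_n=\sigma(n\kappa)/\sigma(\kappa)^{n^2}$ into (\ref{9909984322}), each of the three terms becomes a product of two $t$-values and two $W$-values whose index-sums are all equal: the two $t$-indices sum to $2n+N$ in every term, and the two $W$-indices sum to $N$. Consequently the prefactor $A^2B^{2n+N}$ is common, and a short bookkeeping calculation shows that the total exponent of $\sigma(\kappa)$ in the denominator of each term is the same expression, namely $2n^2+2nN+2N^2+2p^2+2q^2-2Np-2Nq$. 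These powers therefore cancel uniformly across (\ref{9909984322}).

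What remains is a pure four-sigma relation among twelve sigma values. To recognize it as classical, I would pass to the symmetric variables
\begin{equation*}
U=z_0+\left(n+\tfrac{N}{2}\right)\kappa,\qquad X=\left(p-\tfrac{N}{2}\right)\kappa,\qquad Y=\tfrac{N}{2}\kappa,\qquad Z=\left(q-\tfrac{N}{2}\right)\kappa.
\end{equation*}
In these variables the six $z_0$-shifted arguments become $U\pm Y$, $U\pm X$, $U\pm Z$, and the six $W$-arguments become $\pm(X\pm Z)$, $Y\pm X$, $Y\pm Z$. Applying the oddness $\sigma(-u)=-\sigma(u)$ to harmonize signs, identity (\ref{9909984322}) is equivalent to
\begin{equation*}
\sigma(U+Y)\sigma(U-Y)\sigma(X+Z)\sigma(X-Z)=\sigma(U+X)\sigma(U-X)\sigma(Y+Z)\sigma(Y-Z)+\sigma(U+Z)\sigma(U-Z)\sigma(X+Y)\sigma(X-Y),
\end{equation*}
which is a rearrangement of the classical three-term Weierstrass sigma identity for four complex arguments.

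The only delicate part of the argument is the combinatorial verification that the exponents of $\sigma(\kappa)^{-n^2}$ coincide on the two sides and the careful sign tracking needed when passing to the symmetric variables; once these align, the theorem is a direct analytic translation of a well-known sigma identity, and no new ingredient beyond Theorem \ref{987} is required. As a consistency check for the final corollary, specializing $N=4d$, $p=d$, $q=2d$ in (\ref{9909984322}) yields
\begin{equation*}
W_d^2\,t_n t_{n+4d}=W_{2d}^2\,t_{n+d} t_{n+3d}-W_d W_{3d}\,t_{n+2d}^2,
\end{equation*}
so that the subsequence $\tau_m=t_{dm+r}$ satisfies (\ref{5907784}) with $\alpha=(W_{2d}/W_d)^2$ and $\beta=-W_{3d}/W_d$, as promised.
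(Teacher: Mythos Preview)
Your proof is correct and follows essentially the same route as the paper's: substitute the sigma-function representations (\ref{7766557654}) and (\ref{77766}), check that the $A^2B^{2n+N}$ and $\sigma(\kappa)$-power prefactors match across all three terms, and then reduce to the classical three-term Weierstrass sigma identity via a linear change of variables. The only cosmetic difference is that the paper first passes to the intermediate symmetric form $(s,m,u,n)$ of (\ref{599964084}) and uses the change $a=z_0+(n+s/2)\kappa$, $b=(m+s/2)\kappa$, $c=(u+s/2)\kappa$, $d=-(s/2)\kappa$, whereas you work directly in the $(N,p,q,n)$ variables with your $U,X,Y,Z$; the two substitutions are equivalent after the final reindexing $u=q-p$, $s=N-2q$, $m=q$, $n\to n+q$. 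Your consistency check for the corollary is also correct (indeed $\beta_d=-W_{3d}/W_d$, agreeing with the displayed relation $W_d^2 t_n t_{n+4d}=W_{2d}^2 t_{n+d}t_{n+3d}-W_dW_{3d}t_{n+2d}^2$ and with $\beta_1=\beta$ since $W_3=-\beta$).
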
 
\begin{proof}
One can say that the idea of the proof of Theorem \ref{77651} presented below follows the lines proposed in \cite{Hone4}.
Given any quadruple of numbers $(s, m, u, n)\in \mathbb{Z}^4$, we define 
\[
f=\frac{\sigma(\kappa)^{2g}}{A^2B^{2n+s}},
\]
where $g=n^2+m^2+u^2+s^2+s\left(n+m+u\right)$ and then substitute  (\ref{776655}) and  Ward's expression  (\ref{77766}) 
for the terms of the companion  elliptic sequence into the relation 
\[
f\cdot\left(W_{u}W_{u+s}t_{n-m}t_{n+m+s}-W_{m}W_{m+s}t_{n-u}t_{n+u+s} +W_{m-u}W_{u+m+s}t_{n}t_{n+s}\right)=0.
\]
As a result of rather tedious transformations we get not such a terrible expression, namely,
\[
\sigma(u\kappa)\sigma((s+u)\kappa)\sigma(z_0+(n-m)\kappa)\sigma(z_0+(n+s+m)\kappa) 
\]
\[
\;\;\;\;\;\;\;\;-\sigma(m\kappa)\sigma((s+m)\kappa)\sigma(z_0+(n-u)\kappa)\sigma(z_0+(n+s+u)\kappa) \nonumber \\
\]
\begin{equation}
\;\;\;\;\;\;\;\;\;\;\;\;\;\;\;\;\;\;\;\;\;\;\;\;+\sigma((m-u)\kappa)\sigma((m+s+u)\kappa)\sigma(z_0+n\kappa)\sigma(z_0+(n+s)\kappa)=0.
\label{5543}
\end{equation}
Let us now define
\[
a=z_0+\left(n+\frac{s}{2}\right)\kappa,\; b=\left(m+\frac{s}{2}\right)\kappa,\; c=\left(u+\frac{s}{2}\right)\kappa,\; d=-\frac{s}{2}\kappa.
\]
This gives us the opportunity to rewrite (\ref{5543}) as
\[
\sigma(c+d)\sigma(c-d)\sigma(a+b)\sigma(a-b)
-\sigma(b+d)\sigma(b-d)\sigma(a+c)\sigma(a-c)
\]
\[
+\sigma(b+c)\sigma(b-c)\sigma(a+d)\sigma(a-d)=0.
\]
But this last relation is the well-known identity for the Weierstrass sigma function. To prove Theorem \ref{77651}, it remains to make the substitution $u=q-p$, $s=N-2q$, $m=q$ and shift $n\rightarrow n+q$.
\end{proof}

It should be noted that, in two special cases $N=2p$ and $N=2p+1$,  with  $q=p+1$ in both cases, from (\ref{9909984322}),   we immediately obtain the following two formulas which are contained in  Theorem 3 of the paper \cite{Poorten}:
\[
W_1^2t_{n-p}t_{n+p}=W_{p}^2t_{n-1}t_{n+1}-W_{p-1}W_{p+1}t_{n}^2
\]
and
\[
W_1W_2t_{n-p}t_{n+p+1}=W_{p}W_{p+1}t_{n-1}t_{n+2}-W_{p-1}W_{p+2}t_{n}t_{n+1}.
\]

Let us now show a consequence of this theorem.
\begin{corollary}
Given any solution of Somos-4 recurrence, let us define a sequence $(t_{d, n})$ by  $t_{d, n}=t_{dn+r}$, where $d$ is an arbitrary nonzero positive integer  and $r=0,\ldots, d-1$. Then the sequence $\left(t_{d, n}\right)$ satisfies Somos-4 recurrence
\begin{equation}
t_{d, n}t_{d, n+4}=\alpha_d t_{d, n+1}t_{d, n+3}+\beta_d t_{d, n+2}^2.
\label{7653096}
\end{equation}
\end{corollary}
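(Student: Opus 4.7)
The plan is to derive the claimed Somos-4 recurrence for $(t_{d,n})$ as a direct specialization of Theorem \ref{77651}. A Somos-4 recurrence for the subsequence involves the five indices $dn+r,\,d(n+1)+r,\,d(n+2)+r,\,d(n+3)+r,\,d(n+4)+r$ with the middle one squared. I want to pick parameters $(N,p,q)$ in (\ref{9909984322}) so that $n+q = n+N-q$ (so that a squared middle term appears) and so that after shifting $n$ the five distinct indices on the two sides coincide with the desired arithmetic progression.

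The choice $N=4d$, $p=d$, $q=2d$, together with the replacement $n \mapsto dn+r$, achieves this on inspection: the coinciding indices become $dn+r+2d = d(n+2)+r$, and the remaining indices are exactly $dn+r$, $d(n+1)+r$, $d(n+3)+r$, $d(n+4)+r$. The prefactors simplify to $W_{q-p}W_{N-p-q} = W_d^2$, $W_qW_{N-q} = W_{2d}^2$, and $W_pW_{N-p} = W_d W_{3d}$, so Theorem \ref{77651} specializes to
\[
W_d^2\, t_{d,n} t_{d,n+4} = W_{2d}^2\, t_{d,n+1} t_{d,n+3} - W_d W_{3d}\, t_{d,n+2}^2 .
\]
Dividing through by $W_d^2$ yields (\ref{7653096}) with the explicit formulas
\[
\alpha_d = \frac{W_{2d}^2}{W_d^2}, \qquad \beta_d = -\frac{W_{3d}}{W_d} .
\]

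There is no genuine obstacle here beyond spotting the right assignment $N=4d$, $p=d$, $q=2d$; once that is in place the corollary is a pure substitution. The only point needing a line of comment is the non-vanishing $W_d\neq 0$, which corresponds to $d\kappa$ not being a zero of $\sigma$ in the analytic description of Theorem \ref{987}, and is therefore a generic condition on the elliptic data attached to $(t_n)$.
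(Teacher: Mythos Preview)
Your proof is essentially identical to the paper's: the same specialization $N=4d$, $p=d$, $q=2d$ in Theorem~\ref{77651} followed by the shift $n\mapsto dn+r$. In fact your sign in $\beta_d=-W_{3d}/W_d$ is the correct one (check $d=1$: since $W_3=-\beta$ one recovers $\beta_1=\beta$), whereas the paper's displayed formula $\beta_d=W_{3d}/W_d$ carries a typographical sign slip; your added remark on the generic nonvanishing of $W_d$ is also a welcome clarification absent from the paper.
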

\begin{proof}
Let $N=4d$, $p=d$ and $q=2d$. Substituting these value into (\ref{9909984322}), we get 
\[
W_{d}^2t_{n}t_{n+4d}=W_{2d}^2t_{n+d}t_{n+3d}-W_{d}W_{3d}t_{n+2d}^2=0
\]
and now replacing $n\rightarrow dn+r$ we get (\ref{7653096}), where
\begin{equation}
\alpha_d=\frac{W_{2d}^2}{W_{d}^2}\;\;\mbox{and}\;\; \beta_d=\frac{W_{3d}}{W_{d}}.
\label{7688896}
\end{equation}
\end{proof}

For example, calculating, with the help of (\ref{70000004}), (\ref{7764094}) and  (\ref{7688896}),   yields 
\[
\alpha_2=\mathcal{I}^2,\;\; \beta_2=\beta\left(\mathcal{I}^2+\mathcal{J}\right)
\]
and  
\[
\alpha_3=\alpha\left(\mathcal{I}^2+\mathcal{J}\right)^2,\;\;
\beta_3=\alpha^2 \mathcal{I}^3\left(\mathcal{I}^2+\mathcal{J}\right)+\mathcal{J}^3,
\]
where, by definition,  $\mathcal{J}=\alpha^2 \mathcal{I}-\beta^3$.

\section{Three-term Somos-$N$ equation related  equation}

\label{666666}

In a particular case $p=1$ and $q=2$, the Gale-Robinson recurrence (\ref{776655}) becomes
\begin{equation}
t_{n}t_{n+N}=\alpha_{N} t_{n+1}t_{n+N-1}+\beta_{N} t_{n+2}t_{n+N-2},\; N\geq 4.
\label{5084}
\end{equation}
We are forced to number the constants $\alpha$ and $\beta$ in (\ref{5084}), as it is convenient for the formulation of further statements. 
We will call relation (\ref{5084}) a three-term  Somos-$N$ equation. Further we will show that any solution of the Somos-$N$ equation  gives a solution to some other  equation, which in turn represents a relation consistent with a Volterra lattice  equation.

The following lemma can be proven by direct calculation.
\begin{lemma}
One of the first integral for the Somos-$N$ equation (\ref{5084}) is 
\begin{equation}
H_{N}=\sum_{j=0}^{N-4}\frac{t_{j}t_{j+3}}{t_{j+1}t_{j+2}}+\alpha_{N}\frac{t_{1}t_{N-3}}{t_0t_{N-2}}+\alpha_{N}\frac{t_{2}t_{N-2}}{t_{1}t_{N-1}}+\beta_{N}\frac{t_{2}t_{N-3}}{t_0t_{N-1}}.
\label{654009}
\end{equation}
\end{lemma}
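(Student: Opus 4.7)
The plan is to show that $H_N$ is translation invariant by computing $H_N(n+1)-H_N(n)$ on any solution of (\ref{5084}), where $H_N(n)$ denotes the expression (\ref{654009}) with every index shifted by $n$. The sum $\sum_{j=0}^{N-4}\frac{t_{n+j}t_{n+j+3}}{t_{n+j+1}t_{n+j+2}}$ telescopes under this shift, contributing only the two boundary pieces $\frac{t_{n+N-3}t_{n+N}}{t_{n+N-2}t_{n+N-1}} - \frac{t_n t_{n+3}}{t_{n+1}t_{n+2}}$ to the difference. Among the $\alpha_N$-terms, note that the second one in $H_N(n)$, namely $\alpha_N \frac{t_{n+2}t_{n+N-2}}{t_{n+1}t_{n+N-1}}$, coincides with the shift of the first $\alpha_N$-term in $H_N(n+1)$ and therefore drops out of the difference.

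Next I would group the four remaining boundary terms into a \emph{right-end pair} and a \emph{left-end pair}. For the right-end pair, I factor out $t_{n+3}/t_{n+N}$ from
\[
\alpha_N \frac{t_{n+3}t_{n+N-1}}{t_{n+2}t_{n+N}} + \beta_N \frac{t_{n+3}t_{n+N-2}}{t_{n+1}t_{n+N}} = \frac{t_{n+3}}{t_{n+N}} \cdot \frac{\alpha_N t_{n+1}t_{n+N-1} + \beta_N t_{n+2}t_{n+N-2}}{t_{n+1}t_{n+2}},
\]
and apply (\ref{5084}) to collapse the numerator to $t_n t_{n+N}$, so that the pair reduces to $\frac{t_n t_{n+3}}{t_{n+1}t_{n+2}}$. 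An entirely analogous factorization (factoring out $t_{n+N-3}/t_n$ from the left-end pair coming from $H_N(n)$ and invoking (\ref{5084}) once more) reduces the left-end pair to $\frac{t_{n+N-3}t_{n+N}}{t_{n+N-2}t_{n+N-1}}$.

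Combining the telescoping contribution with these two reductions, the non-telescoping part exactly cancels the telescoping boundary terms, giving $H_N(n+1) - H_N(n) = 0$. I do not expect a real obstacle here: the computation is purely mechanical, and the only non-trivial input is the Somos-$N$ relation (\ref{5084}) itself, invoked twice in the form $\alpha_N t_{n+1}t_{n+N-1}+\beta_N t_{n+2}t_{n+N-2} = t_n t_{n+N}$.
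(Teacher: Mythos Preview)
Your argument is correct and is precisely the ``direct calculation'' the paper alludes to without spelling out: the telescoping of the sum, the cancellation of one $\alpha_N$-term against its shift, and the two applications of the Somos-$N$ relation to the right- and left-end pairs are exactly what is needed. The only minor imprecision is a suppressed sign when you state that the left-end pair reduces to $\dfrac{t_{n+N-3}t_{n+N}}{t_{n+N-2}t_{n+N-1}}$; since those terms enter $H_N(n+1)-H_N(n)$ with a minus sign, the reduced contribution is $-\dfrac{t_{n+N-3}t_{n+N}}{t_{n+N-2}t_{n+N-1}}$, which is what you need for the final cancellation --- your concluding sentence makes clear you understand this.
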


On the other hand, given any integer $N\geq 4$, let us consider an   equation 
\begin{equation}
y_{n+1}\left(\sum_{j=0}^{N-3}y_{n+j}-H_{N}\right)=y_{n+N-2}\left(\sum_{j=0}^{N-3}y_{n+j+2}-H_{N}\right),
\label{675430}
\end{equation}
where $H_{N}$ are supposed to be some constant. 
\begin{remark}
It may be puzzling to the fact that now and further we use the same notations that  we have already used above for   another objects, like $H_{N}$, $\beta_{N}$ etc. But one have to be a little patient because  it will later turn out that these notations  are used in fact for the same things.
\end{remark}

We would like to spend some lines to show how equation (\ref{675430}) is related to the Somos-$N$ equation (\ref{5084}).
From the very form of the equation, it is clear that it has the following first integral:
\begin{equation}
\beta_{N}=\prod_{j=1}^{N-3} y_{n+j}\left(\sum_{j=0}^{N-2}y_{n+j}-H_{N}\right).
\label{0986504}
\end{equation}
Moreover we  write down one more not obvious the first integral \cite{Svinin2}, \cite{Hone3}.
\begin{lemma}
A quantity 
\[
\mathcal{I}_{N}=\prod_{j=0}^{N-2} y_{n+j}+\sum_{j=1}^{N-3} y_{n+j}\left(\sum_{j=0}^{N-2} y_{n+j}-H_{N}\right)\prod_{j=1}^{N-3} y_{n+j}.
\]
is the first integral for equation (\ref{675430}).
\end{lemma}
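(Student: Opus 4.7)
The plan is to rewrite $\mathcal{I}_N$ in a more compact form by pulling the common factor $\prod_{j=1}^{N-3} y_{n+j}$ into the first integral $\beta_N$, and then reduce translation invariance to an identity that is a direct consequence of equation (\ref{675430}). Set
\[
\Pi_n=\prod_{j=0}^{N-2} y_{n+j},\qquad P_n=\prod_{j=1}^{N-3} y_{n+j},\qquad S_n=\sum_{j=0}^{N-2} y_{n+j},\qquad Q_n=\sum_{j=1}^{N-3} y_{n+j}.
\]
Then $\Pi_n=y_n y_{n+N-2}P_n$, and the defining formula becomes
\[
\mathcal{I}_N=\Pi_n+Q_n\,P_n(S_n-H_N)=\Pi_n+\beta_N\,Q_n.
\]
Since $\beta_N$ is already a first integral of (\ref{675430}), showing $\mathcal{I}_N|_{n}=\mathcal{I}_N|_{n+1}$ is equivalent to showing that $\Pi_{n+1}-\Pi_n=\beta_N(Q_n-Q_{n+1})$.

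First I would compute both sides explicitly. Note $P_{n+1}=P_n\,y_{n+N-2}/y_{n+1}$, so
\[
\Pi_{n+1}=y_{n+1}y_{n+N-1}P_{n+1}=y_{n+N-1}y_{n+N-2}P_n,
\]
hence $\Pi_{n+1}-\Pi_n=y_{n+N-2}P_n(y_{n+N-1}-y_n)$. On the other hand $Q_n-Q_{n+1}=y_{n+1}-y_{n+N-2}$, so using $\beta_N=P_n(S_n-H_N)$ the required identity collapses to
\[
y_{n+N-2}(y_{n+N-1}-y_n)=(S_n-H_N)(y_{n+1}-y_{n+N-2}).
\]
This is the single key identity to verify.

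The main step, which I expect to be the only real piece of work, is to derive this last identity from (\ref{675430}). Writing (\ref{675430}) as
\[
y_{n+1}(S_n-y_{n+N-2}-H_N)=y_{n+N-2}(S_n-y_n-y_{n+1}+y_{n+N-1}-H_N),
\]
by expressing $\sum_{j=0}^{N-3}y_{n+j}=S_n-y_{n+N-2}$ and $\sum_{j=0}^{N-3}y_{n+j+2}=S_n-y_n-y_{n+1}+y_{n+N-1}$, and then expanding and cancelling the common term $y_{n+1}y_{n+N-2}$, one gets exactly
\[
(S_n-H_N)(y_{n+1}-y_{n+N-2})=y_{n+N-2}(y_{n+N-1}-y_n),
\]
which is the needed relation.

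Combining the three paragraphs, $\mathcal{I}_N|_{n+1}-\mathcal{I}_N|_n=(\Pi_{n+1}-\Pi_n)-\beta_N(Q_n-Q_{n+1})=0$, so $\mathcal{I}_N$ is translation-invariant and hence a first integral of (\ref{675430}). No step is conceptually difficult; the only subtlety is keeping track of which terms belong to $S_n$ versus $\tilde S_{n+2}=\sum_{j=0}^{N-3}y_{n+j+2}$ when applying (\ref{675430}), since the indices of summation in the definition of $\mathcal{I}_N$ (through $S_n$) and of (\ref{675430}) (through $\tilde S_n$) differ by the single term $y_{n+N-2}$.
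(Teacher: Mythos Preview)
Your argument is correct. The paper itself does not supply a proof of this lemma; it merely states the integral and cites \cite{Svinin2} and \cite{Hone3}, so there is no in-paper proof to compare against. Your approach---rewriting $\mathcal{I}_N=\Pi_n+\beta_N Q_n$ using the known first integral $\beta_N$, then reducing translation invariance to the single identity $(S_n-H_N)(y_{n+1}-y_{n+N-2})=y_{n+N-2}(y_{n+N-1}-y_n)$, which is precisely equation~(\ref{675430}) after expanding the two shifted sums---is clean and efficient. All index bookkeeping checks out: $\Pi_{n+1}-\Pi_n=y_{n+N-2}P_n(y_{n+N-1}-y_n)$, $Q_n-Q_{n+1}=y_{n+1}-y_{n+N-2}$, and the cancellation of $y_{n+1}y_{n+N-2}$ when rewriting (\ref{675430}) in terms of $S_n$ is exactly as you describe. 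The only implicit assumption is $P_n\neq 0$ when dividing it out, but this is a generic (Zariski-open) condition and the resulting polynomial identity then holds everywhere.
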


We have every right to consider relation (\ref{0986504}) as an equation with two constants $H_N$ and $\beta_N$. With a substitution $y_n=f_nf_{n+1}$, it becomes
\begin{equation}
\sum_{j=0}^{N-2} f_{n+j}f_{n+j+1}=\frac{\beta_{N}}{\prod_{j=1}^{N-3} f_{n+j}f_{n+j+1}}+H_{N}.
\label{0665404}
\end{equation}
\begin{lemma}
A function 
\begin{equation}
\alpha_{N, n}=\prod_{j=0}^{N-2}f_{n+j}-\frac{\beta_N}{\prod_{j=1}^{N-3}f_{n+j}}.
\label{08804}
\end{equation}
is a 2-integral for equation (\ref{0665404}). Moreover, we have
\begin{equation}
\alpha_{N, n+1}=\prod_{j=1}^{N-3}f_{n+j}\left(H_{N}-\sum_{j=0}^{N-3} f_{n+j}f_{n+j+1}\right).
\label{07434}
\end{equation}
\end{lemma}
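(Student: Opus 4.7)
The plan is to establish the explicit formula (\ref{07434}) first, and then deduce the 2-integral property $\alpha_{N,n+2}=\alpha_{N,n}$ as a quick consequence. Both parts are algebraic manipulations of equation (\ref{0665404}); the trick is that one isolates different terms of the sum $\sum_{j=0}^{N-2}f_{n+j}f_{n+j+1}$ in the two parts.

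For (\ref{07434}), I would rewrite (\ref{0665404}) at index $n$ by splitting off the last term of the sum, obtaining
$H_N-\sum_{j=0}^{N-3}f_{n+j}f_{n+j+1}=f_{n+N-2}f_{n+N-1}-\beta_N\bigl/\prod_{j=1}^{N-3}f_{n+j}f_{n+j+1}$,
and then multiply both sides by $\prod_{j=1}^{N-3}f_{n+j}$. On the right, the first term collapses to $\prod_{j=1}^{N-1}f_{n+j}$, while in the second term the factor $\prod_{j=1}^{N-3}f_{n+j}$ cancels half of the telescoping denominator via the identity $\prod_{j=1}^{N-3}f_{n+j}f_{n+j+1}=\prod_{j=1}^{N-3}f_{n+j}\cdot\prod_{j=2}^{N-2}f_{n+j}$, leaving $\beta_N\bigl/\prod_{j=2}^{N-2}f_{n+j}$. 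This is precisely what one gets by shifting $n\mapsto n+1$ in the definition (\ref{08804}), so (\ref{07434}) follows.

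For the 2-integral property, I would apply the formula (\ref{07434}) at $n+1$ in place of $n$ to write $\alpha_{N,n+2}=\prod_{j=2}^{N-2}f_{n+j}\bigl(H_N-\sum_{j=1}^{N-2}f_{n+j}f_{n+j+1}\bigr)$, and now use (\ref{0665404}) \emph{again at index $n$}, this time splitting off the \emph{first} term $f_nf_{n+1}$ from the sum, so that $H_N-\sum_{j=1}^{N-2}f_{n+j}f_{n+j+1}=f_nf_{n+1}-\beta_N\bigl/\prod_{j=1}^{N-3}f_{n+j}f_{n+j+1}$. Multiplying by $\prod_{j=2}^{N-2}f_{n+j}$ and invoking the same telescoping cancellation as before turns the right-hand side into $\prod_{j=0}^{N-2}f_{n+j}-\beta_N\bigl/\prod_{j=1}^{N-3}f_{n+j}$, which is exactly $\alpha_{N,n}$.

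The main obstacle here is nothing conceptual; it is purely index bookkeeping on the two ends of the sum and the telescoping product. What makes the argument go through cleanly is the factorization $\prod_{j=1}^{N-3}f_{n+j}f_{n+j+1}=\prod_{j=1}^{N-3}f_{n+j}\cdot\prod_{j=2}^{N-2}f_{n+j}$, which lets the multiplying product on the outside eat the denominator in both halves of the proof. Conceptually, the lemma simply expresses the fact that equation (\ref{0665404}) hides a second conserved quantity, of period two in $n$, that decouples the even- and odd-indexed subsequences — an analogue of how $\beta_N$ in (\ref{0986504}) is a period-one integral of the same system.
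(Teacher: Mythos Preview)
Your argument is correct: both the derivation of (\ref{07434}) and the deduction of $\alpha_{N,n+2}=\alpha_{N,n}$ are clean applications of (\ref{0665404}) with the telescoping identity $\prod_{j=1}^{N-3}f_{n+j}f_{n+j+1}=\prod_{j=1}^{N-3}f_{n+j}\cdot\prod_{j=2}^{N-2}f_{n+j}$, and the index bookkeeping checks out. The paper itself states this lemma without proof (leaving it as a direct calculation), so your write-up supplies exactly the details the author omitted, by what is essentially the intended route.
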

Of course, the product $\alpha_n^{(k)}\alpha_{n+1}^{(k)}$ is the first integral. 
\begin{lemma}
We have a relation $\alpha_{N, 0}\alpha_{N, 1}=\mathcal{I}_{N}-H_{N}\beta_{N}$.
\end{lemma}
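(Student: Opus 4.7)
The plan is to verify the identity by direct substitution, using the two available formulas for $\alpha_{N,\cdot}$: the definition (\ref{08804}) at index $n=0$, which expresses $\alpha_{N,0}$ in terms of $f$'s and $\beta_N$, and the auxiliary formula (\ref{07434}) at $n=0$, which gives $\alpha_{N,1}$ as a product involving $H_N$ and a sum of consecutive $ff$'s. So first I would write
\[
\alpha_{N,0}\alpha_{N,1}=\left(\prod_{j=0}^{N-2}f_{j}-\frac{\beta_{N}}{\prod_{j=1}^{N-3}f_{j}}\right)\prod_{j=1}^{N-3}f_{j}\left(H_{N}-\sum_{j=0}^{N-3}f_{j}f_{j+1}\right),
\]
and clear the inner denominator to obtain a product of two factors, one linear in $\beta_{N}$ and one linear in $H_{N}$.

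Next I would eliminate the $f$'s in favor of $y$'s by means of the telescoping identity
\[
\prod_{j=0}^{N-2}f_{j}\cdot\prod_{j=1}^{N-3}f_{j}=y_{0}\prod_{j=1}^{N-3}y_{j},
\]
which follows at once from $y_j=f_j f_{j+1}$ by expanding both sides and comparing exponents of each $f_j$. Combining this with $\sum_{j=0}^{N-3}f_{j}f_{j+1}=\sum_{j=0}^{N-3}y_{j}$ and with the definition of $\beta_{N}$, the product collapses to
\[
\alpha_{N,0}\alpha_{N,1}=\prod_{j=1}^{N-3}y_{j}\cdot\left(H_{N}-\sum_{j=1}^{N-2}y_{j}\right)\!\left(H_{N}-\sum_{j=0}^{N-3}y_{j}\right),
\]
after noticing that $y_{0}-\sum_{j=0}^{N-2}y_{j}+H_{N}=H_{N}-\sum_{j=1}^{N-2}y_{j}$.

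Finally I would match this with $\mathcal{I}_{N}-H_{N}\beta_{N}$. Setting $S=\sum_{j=0}^{N-2}y_{j}$ and $X=H_{N}-S$, the two shifted sums above are $X+y_{0}$ and $X+y_{N-2}$, so expanding gives
\[
\alpha_{N,0}\alpha_{N,1}=\prod_{j=1}^{N-3}y_{j}\cdot X(X+y_{0}+y_{N-2})+y_{0}y_{N-2}\prod_{j=1}^{N-3}y_{j},
\]
and the last term is exactly $\prod_{j=0}^{N-2}y_{j}$. On the other side, a direct rearrangement using the definitions of $\mathcal{I}_{N}$ and $\beta_{N}$ gives $\mathcal{I}_{N}-H_{N}\beta_{N}=\prod_{j=0}^{N-2}y_{j}+\prod_{j=1}^{N-3}y_{j}\cdot X(X+y_{0}+y_{N-2})$, completing the proof.

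The only real obstacle is bookkeeping with the product ranges, in particular verifying the telescoping identity above and the factorization $\prod_{0}^{N-2}y_{j}=y_{0}y_{N-2}\prod_{1}^{N-3}y_{j}$; the rest is linear algebra in $H_{N}$ and $S$.
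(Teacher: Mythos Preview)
Your computation is correct: the telescoping identity $\prod_{j=0}^{N-2}f_j\cdot\prod_{j=1}^{N-3}f_j=y_0\prod_{j=1}^{N-3}y_j$ holds, the reduction to $\prod_{j=1}^{N-3}y_j\,(X+y_0)(X+y_{N-2})$ is accurate, and the matching with $\mathcal{I}_N-H_N\beta_N$ via $H_N-\sum_{j=1}^{N-3}y_j=X+y_0+y_{N-2}$ goes through without issue. The paper itself states this lemma without proof (treating it, like the neighbouring lemmas, as a direct calculation), so your argument is exactly the kind of verification the author leaves to the reader.
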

Let us now rewrite (\ref{08804}) as
\[
\alpha_{N, n}\prod_{j=1}^{N-3} f_{n+j}=\prod_{j=0}^{N-3} f_{n+j}f_{n+j+1}-\beta_N.
\]
Finally, substituting, $f_n=t_nt_{n+2}/t_{n+1}^2$, into the latter, we get relation
\begin{equation}
t_nt_{n+N}=\alpha_{N, n}t_{n+1}t_{n+N-1}+\beta_N t_{n+2}t_{n+N-2}.
\label{0004}
\end{equation}
One sees that the last relation is nothing more than the Somos-$N$ equation (\ref{5084}), only with a 2-periodic coefficient $\alpha_N$. So, summing up the above, we conclude the following:
\begin{theorem} \label{00001}
Given any solution of equation (\ref{0004}), a sequence $(y_n)$ defined by a substitution $y_n=t_nt_{n+3}/(t_{n+1}t_{n+2})$ is a solution of equation (\ref{675430}).
\end{theorem}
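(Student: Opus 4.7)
The plan is to invert the chain of substitutions used earlier in this section to derive (\ref{0004}) from (\ref{675430}); every step of that chain is reversible.

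First I would introduce $f_n = t_nt_{n+2}/t_{n+1}^2$, observing that $y_n = t_nt_{n+3}/(t_{n+1}t_{n+2}) = f_nf_{n+1}$. Direct cancellation produces the telescoping identities $\prod_{j=0}^{N-2} f_{n+j} = t_nt_{n+N}/(t_{n+1}t_{n+N-1})$ and $\prod_{j=1}^{N-3} f_{n+j} = t_{n+1}t_{n+N-1}/(t_{n+2}t_{n+N-2})$, after which dividing (\ref{0004}) by $t_{n+1}t_{n+N-1}$ rewrites the Somos-$N$ equation as (\ref{08804}), namely $\alpha_{N,n} = \prod_{j=0}^{N-2} f_{n+j} - \beta_N/\prod_{j=1}^{N-3} f_{n+j}$.

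Next I would translate the 2-periodicity $\alpha_{N,n+2}=\alpha_{N,n}$ built into (\ref{0004}) into a statement purely about the $y_n$. The relation $y_k=f_kf_{k+1}$ yields
\[
\frac{\prod_{j=0}^{N-2} f_{n+j+2}}{\prod_{j=0}^{N-2} f_{n+j}} = \frac{y_{n+N-1}}{y_n}, \qquad \frac{\prod_{j=1}^{N-3} f_{n+j+2}}{\prod_{j=1}^{N-3} f_{n+j}} = \frac{y_{n+N-2}}{y_{n+1}}.
\]
Inserting these into the 2-periodicity condition and clearing denominators produces, after short simplification, the equivalence
\[
\alpha_{N,n+2}-\alpha_{N,n}=0 \;\Longleftrightarrow\; H_N(n+1)=H_N(n),
\]
where $H_N(n) := \sum_{j=0}^{N-2} y_{n+j} - \beta_N\big/\prod_{j=1}^{N-3} y_{n+j}$. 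So a constant $H_N$ exists with $H_N(n)\equiv H_N$, which is precisely (\ref{0665404}), equivalently (\ref{0986504}).

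Finally, writing the conservation law $\beta_N = \prod_{j=1}^{N-3} y_{n+j}\bigl(\sum_{j=0}^{N-2} y_{n+j} - H_N\bigr)$ at indices $n$ and $n+1$, then splitting $\sum_{j=0}^{N-2} y_{n+j} = \sum_{j=0}^{N-3} y_{n+j} + y_{n+N-2}$ (and similarly for the shifted sum) and canceling the common factor $y_{n+1}y_{n+N-2}$ on both sides, yields exactly (\ref{675430}).

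The main obstacle should be the middle step, identifying the 2-periodicity of $\alpha_{N,n}$ with the constancy of $H_N(n)$. The forward implication is implicit in the paper's exposition preceding the theorem; my proof simply runs the same algebra in reverse, with the ratio identities for products of $f$'s in terms of $y$'s eliminating the auxiliary variable $f_n$ in favor of the physical variable $y_n$.
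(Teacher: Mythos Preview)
Your proposal is correct and follows exactly the route implicit in the paper: the text before the theorem derives (\ref{0004}) from (\ref{675430}) through the chain $y_n\to f_n\to t_n$, and you simply run that chain backwards, which is precisely what the paper intends (it gives no separate proof, noting only that the argument is essentially that of \cite{Hone3}). For the middle step you may find it cleaner to combine (\ref{08804}) at indices $n$ and $n+1$ to obtain the identity $H_N(n+1)-H_N(n)=\bigl(\alpha_{N,n+2}-\alpha_{N,n}\bigr)\big/\prod_{j=2}^{N-2} f_{n+j}$, which makes the claimed equivalence immediate.
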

It goes without saying that in the case of a constant coefficient $\alpha_N$ we have the same relationship between the Somos-$N$ equation and equation (\ref{675430}).
From (\ref{07434}), we get
\begin{eqnarray}
H_N&=&\sum_{j=0}^{N-3}f_{j}f_{j+1}+\frac{\alpha_{N, 1}}{\prod_{j=1}^{N-3}f_{j}}   \nonumber\\
&=&\sum_{j=0}^{N-3} \frac{t_{j}t_{j+3}}{t_{j+1}t_{j+2}}+\alpha_{N, 1}\frac{t_{2}t_{N-2}}{t_{1}t_{N-1}} \nonumber\\
&=&\sum_{j=0}^{N-4} \frac{t_{j}t_{j+3}}{t_{j+1}t_{j+2}}+\alpha_{N, 0}\frac{t_{1}t_{N-3}}{t_{0}t_{N-2}}+\alpha_{N, 1}\frac{t_{2}t_{N-2}}{t_{1}t_{N-1}}+\beta_N \frac{t_{2}t_{N-3}}{t_{0}t_{N-1}}.
\label{764097}
\end{eqnarray}
Here, for simplicity of notation, we set $n=0$. One sees, that (\ref{764097}) generalizes expression (\ref{654009}) for non-autonomous case.
\begin{remark}
The proof of Theorem \ref{00001} is actually contained in the paper \cite{Hone3} except that equation (\ref{0986504}) is considered instead of equation (\ref{675430}). 
\end{remark}

\section{Volterra lattice and its solutions}

\label{77777}

The Volterra lattice  (\ref{60}), at one time was noted by physicists and used by them as a simplified model in some processes of plasma physics \cite{Manakov}, \cite{Zakharov}. In \cite{Manakov}, the method of the inverse scattering problem was applied to this equation. Due to this, the Volterra lattice can be considered as an integrable differential-difference equation. Later it was found that this equation is in the list of  equations sharing a property of having an infinite number of higher symmetries and local conservation laws. It was shown in \cite{Svinin1} that the Volterra lattice and many other integrable evolution equations and systems of equations can be obtained using an appropriate constraint on bi-infinite sequence of Kadomtsev-Petviashvili (KP) hierarchies.

It is clear that the solution of the Cauchy problem  for the Volterra lattice in the case of general initial conditions  is an unbearable task, but nevertheless, some special cases can be noted when the problem can be solved exactly, especially if the initial data itself is a solution of some  integrable equation. 
As was shown in \cite{Svinin1}, equation (\ref{675430}) arises as a result of further invariant restriction of the sequence of KP hierarchies and therefore is consistent   with  the Volterra lattice equation (\ref{60}). 
This means the following. Given some $N\geq 4$, let $Y_n(0)=y_{n}$ be an initial data satisfying recurrence (\ref{675430}) with  fixed constant $H_{N}$. Then a corresponding solution of the Cauchy problem $Y_n(x)$ must satisfy (\ref{675430}) for any $x\in\mathbb{R}$. Thus it makes sense to look for the solution of the Volterra chain in the form
\[
Y_n(x)=y_n+\sum_{j\geq 1}Y_{n, j}\frac{x^j}{j!},
\]
where $(y_n)$ is a solution of some of equations (\ref{675430}) hoping to sum it up later.

The following fact is useful for our aim. 
\begin{lemma}
Let  $(\tau_n(x))$ satisfies bi-linear differential-difference equation 
\begin{equation}
\tau_n\frac{\partial \tau_{n+1}}{\partial x}-\tau_{n+1}\frac{\partial \tau_{n}}{\partial x}=\tau_{n-1}\tau_{n+2}.
\label{67532}
\end{equation}
then  
\begin{equation}
Y_n=\frac{\tau_n \tau_{n+3}}{\tau_{n+1}\tau_{n+2}}
\label{60002}
\end{equation}
satisfies the Volterra lattice (\ref{60}). 
\end{lemma}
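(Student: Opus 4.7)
The plan is to verify the identity by a direct logarithmic-derivative calculation, exploiting the form of the bilinear equation (\ref{67532}). First I would take the logarithmic derivative of (\ref{60002}) to get
\[
\frac{1}{Y_n}\frac{\partial Y_n}{\partial x}=\frac{\partial_x\tau_n}{\tau_n}+\frac{\partial_x\tau_{n+3}}{\tau_{n+3}}-\frac{\partial_x\tau_{n+1}}{\tau_{n+1}}-\frac{\partial_x\tau_{n+2}}{\tau_{n+2}},
\]
so that the claim reduces to showing that the right-hand side equals $Y_{n+1}-Y_{n-1}$.

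The next step is to rewrite (\ref{67532}) in a form convenient for telescoping. Dividing (\ref{67532}) through by $\tau_m\tau_{m+1}$ gives, for every index $m\in\mathbb{Z}$, the identity
\[
\frac{\partial_x\tau_{m+1}}{\tau_{m+1}}-\frac{\partial_x\tau_m}{\tau_m}=\frac{\tau_{m-1}\tau_{m+2}}{\tau_m\tau_{m+1}}.
\]
I would now group the four logarithmic-derivative terms from the first display into two such differences, namely the pair $(m=n)$ giving $\partial_x\log\tau_{n+1}-\partial_x\log\tau_n$ and the pair $(m=n+2)$ giving $\partial_x\log\tau_{n+3}-\partial_x\log\tau_{n+2}$. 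Substituting the telescoped form above yields
\[
\frac{1}{Y_n}\frac{\partial Y_n}{\partial x}=\frac{\tau_{n+1}\tau_{n+4}}{\tau_{n+2}\tau_{n+3}}-\frac{\tau_{n-1}\tau_{n+2}}{\tau_n\tau_{n+1}}=Y_{n+1}-Y_{n-1},
\]
after recognizing the two fractions as $Y_{n+1}$ and $Y_{n-1}$ according to the definition (\ref{60002}). Multiplying through by $Y_n$ gives the Volterra lattice equation (\ref{60}).

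There is essentially no serious obstacle; this is a standard Hirota-type reduction. The only point requiring care is the bookkeeping of indices, which must be chosen so that the two differences coming from (\ref{67532}) produce precisely the shifts $n\pm 1$ appearing in the Volterra lattice. Once the indices are paired correctly as $(n,n+1)$ and $(n+2,n+3)$, no further computation is needed.
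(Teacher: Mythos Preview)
Your argument is correct. The paper states this lemma without proof, presenting it as a known fact (``The following fact is useful for our aim''), so there is nothing to compare against; your logarithmic-derivative computation via the telescoping pairs $(m=n)$ and $(m=n+2)$ is the standard and cleanest way to verify this Hirota-type reduction.
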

In other words, instead of looking for solution to the Volterra lattice, we  look for solution of bi-linear equation (\ref{67532}) of the form
\begin{equation}
\tau_n(x)=t_n+\sum_{j\geq 1}\tau_{n, j}\frac{x^j}{j!},
\label{678652}
\end{equation}
where $(t_n)$ is any sequence satisfying the non-autonomous Somos-$N$ equation (\ref{0004}). With a solution (\ref{678652}), due to what we got in  Section \ref{666666},  formula (\ref{60002}) gives  the corresponding solution of the Volterra lattice. 

Unfortunately we  do not have a clear algorithm for finding a solution, and so far we can only show the following result obtained by trial and error.
\begin{theorem} \label{1}
Let $(T_n)$ be any second-order linear sequence.
Let $B=B(x)$ be a  solution of the Riccati equation 
\begin{equation}
B^{\prime}=\frac{P}{Q}B\left(B-P\right)+P
\label{54}
\end{equation}
with the initial condition $B(0)=0$. Then 
\begin{equation}
\tau_{n}(x)=\left(T_n-T_{n-1}B(x)\right)e^{nx}.
\label{609849}
\end{equation}
is a solution of the differential-difference equation (\ref{67532}) with the condition $\tau_{n}(0)=T_n$. 
\end{theorem}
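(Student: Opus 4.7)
The plan is to substitute $\tau_n(x) = (T_n - T_{n-1}B(x))e^{nx}$ directly into (\ref{67532}) and reduce the resulting algebraic identity, by means of Vajda's identity (\ref{6400026}), to the Riccati equation (\ref{54}). A short computation gives $\partial_x\tau_n = [n(T_n-T_{n-1}B) - T_{n-1}B']e^{nx}$ and likewise for $\tau_{n+1}$. Forming $\tau_n\,\partial_x\tau_{n+1}-\tau_{n+1}\,\partial_x\tau_n$ produces a common factor $e^{(2n+1)x}$ that matches the one arising from $\tau_{n-1}\tau_{n+2}$; cancelling it leaves an algebraic identity in $T_{n-2},\ldots,T_{n+2},B,B'$. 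The $n$-dependent contributions telescope: the $(n+1)$- and $-n$-weighted pieces combine into a single copy of $(T_n-T_{n-1}B)(T_{n+1}-T_nB)$, and the $B'$-proportional remainder collects into $B'(T_{n-1}T_{n+1}-T_n^2)$. Subtracting the expansion of the right-hand side $(T_{n-1}-T_{n-2}B)(T_{n+2}-T_{n+1}B)$ then leaves an expression whose coefficients of $1,B,B^2,B'$ are each Casoratian-type bilinear combinations of $(T_n)$.

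The crucial step is that each such bilinear is precisely an instance of Vajda's identity (\ref{6400026}) with $(p,q)\in\{(1,1),(1,2),(2,2)\}$ (recalling $D_1=1$, $D_2=P$), and hence collapses to a monomial of the form $\pm c\,Q^{n-k}P^\ell$ with common constant $c = Qt_0^2-Pt_0t_1+t_1^2$. The sequence-dependent constant $c$ factors out uniformly, and after extracting $cQ^{n-2}$ the vanishing condition reduces to the single algebraic equation $QB' = PB^2 - P^2B + PQ$, which is exactly (\ref{54}). The initial condition $\tau_n(0)=T_n$ then follows immediately from $B(0)=0$.

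The main obstacle is purely organizational: one must group LHS$-$RHS so that each bilinear block matches the correct specialization of Vajda, and verify that the $B$-, $B^2$- and $B'$-coefficients simplify consistently. A pleasant structural observation is that the Riccati equation does not depend on the particular choice of $(T_n)$: the constant $c$ cancels from every term, so $(T_n)$ enters the ansatz but not the ODE governing $B$.
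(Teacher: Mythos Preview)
Your proof is correct and follows essentially the same route as the paper's: direct substitution of the ansatz into (\ref{67532}), cancellation of the common exponential, and reduction via Vajda's identity (\ref{6400026}) with the specializations $(p,q)\in\{(1,1),(1,2),(2,2)\}$ to an equation in $B,B'$ alone. The only cosmetic difference is the order of the two main ingredients: the paper substitutes the Riccati equation (\ref{54}) for $B'$ first and then invokes Vajda to verify the resulting polynomial identity in $B$, whereas you group by $1,B,B^2,B'$, apply Vajda to each coefficient, and recover (\ref{54}) as the residual condition; the computations and the four bilinear evaluations are identical.
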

\begin{proof}
This theorem  can be proved by direct calculation. Substituting (\ref{609849}) into (\ref{67532}), we get the relation
\begin{eqnarray*}
&&\left(T_n-T_{n-1}B\right)\left((n+1)T_{n+1}-(n+1)T_{n}B-T_{n}B^{\prime}\right) \\[0.2cm]
&&\;\;\;\;\;\;\;\;\;\;\;-\left(T_{n+1}-T_{n}B\right)\left(nT_{n}-nT_{n-1}B-T_{n-1}B^{\prime}\right) \\[0.2cm]
&&\;\;\;\;\;\;\;\;\;\;\;\;\;\;\;\;-\left(T_{n-1}-T_{n-2}B\right)\left(T_{n+2}-T_{n+1}B\right)=0
\end{eqnarray*}
which needs to be proven. Since $B=B(x)$ is supposed to be a solution of the Riccati equation (\ref{54}), we can reduce  the last relation to the form
\begin{eqnarray}
&&-\left(\frac{P}{Q}B(B-P)+P\right)\left(T_n^2-T_{n-1}T_{n+1})+B^2(T_{n-1}T_n-T_{n-2}T_{n+1}\right) \nonumber \\[0.2cm]
&&\;\;\;\;\;\;\;\;\;\;\;\;\;-B\left(T_n^2-T_{n-2}T_{n+2}\right)+T_nT_{n+1}-T_{n-1}T_{n+2}=0.
\label{564211}
\end{eqnarray}
Using Vajda's identity (\ref{6400026}) for the second-order linear sequence, we can write
\[
T_n^2-T_{n-1}T_{n+1}=cQ^{n-1}D_1^2,\;\; T_{n-1}T_{n}-T_{n-2}T_{n+1}=cQ^{n-2}D_1D_2,
\]
\[
T_n^2-T_{n-2}T_{n+2}=cQ^{n-2}D_2^2,\;\; T_{n}T_{n+1}-T_{n-1}T_{n+2}=cQ^{n-1}D_1D_2,
\]
where $c=Qt_0^2-Pt_0t_1+t_1^2$. With these relations we bring (\ref{564211}) to the following form: 
\[
-\left(\frac{P}{Q}B(B-P)+P\right)Q^{n-1}D_1^2+B^2Q^{n-2}D_1D_2
-BQ^{n-2}D_2^2
+Q^{n-1}D_1D_2=0.
\]
Finally, substituting $D_1=1$ and $D_2=P$ into the last relation, we make sure that this is an identity. 
\end{proof}

\begin{corollary} \label{2}
A solution of the Cauchy problem for the Volterra lattice (\ref{60}) with initial data  $Y_n(0)=T_nT_{n+3}/(T_{n+1}T_{n+2})$ is given by
\begin{equation}
Y_n(x)=\frac{\left(T_n-T_{n-1}B(x)\right)\left(T_{n+3}-T_{n+2}B(x)\right)}{\left(T_{n+1}-T_{n}B(x)\right)\left(T_{n+2}-T_{n+1}B(x)\right)}.
\label{9988}
\end{equation}
\end{corollary}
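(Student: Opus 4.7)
The plan is to derive the corollary as an essentially immediate consequence of Theorem \ref{1} combined with the lemma preceding it that converts solutions of the bilinear equation (\ref{67532}) into solutions of the Volterra lattice (\ref{60}) via the formula (\ref{60002}). So the strategy is: take $\tau_n(x)$ from Theorem \ref{1}, form the ratio $Y_n = \tau_n\tau_{n+3}/(\tau_{n+1}\tau_{n+2})$, simplify, and check the initial condition.

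First I would let $B=B(x)$ denote the solution of the Riccati equation (\ref{54}) with $B(0)=0$, and set $\tau_n(x) = (T_n - T_{n-1}B(x))e^{nx}$ as in (\ref{609849}). By Theorem \ref{1}, this $\tau_n$ solves (\ref{67532}). Then by the lemma that precedes Theorem \ref{1}, the quantity
\[
Y_n(x) = \frac{\tau_n(x)\,\tau_{n+3}(x)}{\tau_{n+1}(x)\,\tau_{n+2}(x)}
\]
is a solution of the Volterra lattice (\ref{60}).

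Next I would compute this ratio explicitly. The exponential factors contribute $e^{nx}\cdot e^{(n+3)x}/(e^{(n+1)x}\cdot e^{(n+2)x}) = e^{(2n+3)x}/e^{(2n+3)x} = 1$, so they cancel cleanly, leaving
\[
Y_n(x) = \frac{(T_n - T_{n-1}B(x))(T_{n+3} - T_{n+2}B(x))}{(T_{n+1} - T_{n}B(x))(T_{n+2} - T_{n+1}B(x))},
\]
which is exactly the formula (\ref{9988}).

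Finally, it remains to verify the initial condition. Since $B(0)=0$ by hypothesis, evaluating at $x=0$ gives $\tau_n(0) = T_n$, and therefore $Y_n(0) = T_nT_{n+3}/(T_{n+1}T_{n+2})$ as required. There is no real obstacle here; the only mildly delicate point is just to notice that the $e^{nx}$ factors in $\tau_n$ are necessary for Theorem \ref{1} but conveniently cancel in the Volterra variable $Y_n$, which is the reason the final formula (\ref{9988}) involves no exponentials at all.
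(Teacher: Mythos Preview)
Your proof is correct and follows exactly the route the paper intends: the corollary is stated immediately after Theorem \ref{1} with no separate proof, precisely because substituting (\ref{609849}) into the lemma's formula (\ref{60002}), cancelling the exponential factors, and invoking $B(0)=0$ is the entire argument. Your write-up simply makes explicit what the paper leaves to the reader.
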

Formula (\ref{9988}) gives an infinite set of solutions parametrized by  $(t_0, t_1, P, Q)\in \mathbb{R}^4$. We do not discuss here the physical meaning of this solution, if it exists at all. In any case, the physical solution must at least satisfy positivity condition $Y_n(x)>0$ in the domain of its definition, and we leave it as a separate task to identify such solutions. It is possible that solution of the form (\ref{9988}) may be of interest from the point of view of number theory.

\section{On Maclaurin series for $B(x)$}

\label{99999}

Let us discuss some technical details. Namely, let us discuss the Maclaurin series for the function $B(x)$ that is, by definition, a solution of the Riccati equation (\ref{54}). The first few terms of this series have the following form:
\begin{eqnarray}
B(x)&=&Px-\frac{P^3}{Q}\frac{x^2}{2!}+\frac{P^3\left(P^2+2Q\right)}{Q^2}\frac{x^3}{3!}-\frac{P^5\left(P^2+8Q\right)}{Q^3}\frac{x^4}{4!} \nonumber \\
&&+\frac{P^5\left(P^4+22P^2Q+16 Q^2\right)}{Q^4}\frac{x^5}{5!}-\cdots
\label{666432}
\end{eqnarray}
Defining new variables 
\[
A=-\frac{P}{Q}B,\;\; z= -\frac{P^2}{Q}x\;\; \mbox{and}\;\; q=\frac{Q}{P^2},
\]
from (\ref{666432}), we get
\[
A(z)=z+\frac{z^2}{2!}+(1+2q)\frac{z^3}{3!}+(1+8q)\frac{z^4}{4!}+(1+22q+16q^2)\frac{z^5}{5!}+\ldots 
\]
It is easy to check that this function, by  (\ref{54}), satisfies an equation
\[
\frac{dA}{dz}=1+A+qA^2. 
\]
From \cite{Sloane} http://oeis.org/A101280 it is known that this is the Riccati equation for the bi-variate generating function for a number triangle $(e_{n, j})$, where $n\geq 1$ and $0\leq j\leq \lfloor (n-1)/2 \rfloor$. To be more exacty, these numbers are defined by the recurrence relation
\[
e_{n, j}=(j+1)e_{n-1, k}+\left(2n-4j\right)e_{n-1, j-1}.
\]

The triangle of numbers $(e_{n, j})$ has the following  property: let $(E(n, j))$ be a triangle of the first-order Euler numbers \cite{Sloane} http://oeis.org/A173018 that can be defined, for example,  by  Worpitzky's identity \cite{Graham}
\[
x^n=\sum_{j=0}^{n-1}E(n, j){x+j\choose n}.
\]
They  can also be  defined by  the recurrence relation
\[
E(n, j)=(j+1)E(n-1, j)+\left(n-j\right)E(n-1, j-1).
\]
The relationship of these two number triangles is determined by the following formula:
\[
\sum_{j=0}^{n-1} E(n, j) x^j = \sum_{j=0}^{\lfloor (n-1)/2 \rfloor}e_{n, j}x^j(1+x)^{n-1-2j}.
\]

\section{On Maclaurin series for $\tau_n(x)$}

\label{10000000}

It is easy to calculate the first few terms of the Maclaurin series (\ref{678652}). We have, for example,
\[
\tau_{n, 1}=nT_n-PT_{n-1},\;\; \tau_{n, 2}=n^2T_n-\left(2Pn- \frac{P^3}{Q}\right)T_{n-1},\ldots
\]
By direct substitution, one can check the following. The sequences $(\tau_{n, 1})$ and $(\tau_{n, 2})$ satisfy linear recurrences
\[
\tau_{n+4, 1}=2P\tau_{n+3, 1}-\left(P^2+2Q\right)\tau_{n+2, 1}+2Q P\tau_{n+1, 1}-Q^2\tau_{n, 1}
\]
and
\begin{eqnarray*}
\tau_{n+6, 2}&=&3P\tau_{n+5, 2}-3\left(P^2+Q\right)\tau_{n+4, 2}+ P\left(P^2+6Q\right)\tau_{n+3, 2}-3\left(P^2+Q\right)Q\tau_{n+2, 2} \\
&&+3PQ^2\tau_{n+1, 2}-Q^3\tau_{n, 2},
\end{eqnarray*}
respectively. Inspired by this observations, we can assume the following.
\begin{conjecture}
Given any $r\geq 0$, let us define a set of polynomials $\{f_{r, j}(P, Q) : j=0,\ldots, 2r+2\}$ by
\[
F^{r+1}=\sum_{j=0}^{2r+2}f_{r, j}(P, Q) X^j,
\]
where $F=X^2-PX+Q$ is a characteristic polynomial of a linear sequence $(T_n)$. Then the sequence $(\tau_{n, r})$ satisfies $(2r+2)$-order linear recurrence
\[
\sum_{j=0}^{2r+2}f_{r, j}(P, Q) \tau_{n+j, r}=0.
\]
\end{conjecture}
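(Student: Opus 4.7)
\section*{Proof proposal for the conjecture}

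The plan is to show directly that $\tau_{n,r}$ lies in the kernel of the difference operator $F(E)^{r+1}$, where $E$ is the shift operator $E u_n = u_{n+1}$. Since $F(X)^{r+1}=\sum_{j=0}^{2r+2}f_{r,j}(P,Q)X^j$ by definition, this is exactly the asserted recurrence.

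\textbf{Step 1: an explicit formula for $\tau_{n,r}$.} From Theorem~\ref{1}, $\tau_n(x)=(T_n-T_{n-1}B(x))e^{nx}$ where $B$ is analytic at $0$ with $B(0)=0$ (this follows from the Riccati equation (\ref{54}), which has polynomial coefficients in $B$ and regular right-hand side there). Leibniz's rule applied to the second summand, together with $B(0)=0$, yields
\[
\tau_{n,r}=\frac{\partial^r \tau_n}{\partial x^r}\bigg|_{x=0}=n^r T_n-T_{n-1}\sum_{k=1}^{r}\binom{r}{k}B^{(k)}(0)\,n^{r-k}.
\]
Thus $\tau_{n,r}=P_r(n)T_n+Q_r(n)T_{n-1}$ where $\deg_n P_r=r$ and $\deg_n Q_r\le r-1$, and the coefficients are (rational) functions of $P,Q$ obtained by successive differentiation of (\ref{54}).

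\textbf{Step 2: a general annihilation lemma.} I will prove that for every integer $j\ge0$ the sequences $n\mapsto n^jT_n$ and $n\mapsto n^jT_{n-1}$ are annihilated by $F(E)^{j+1}$. Writing the roots of $F$ as $\alpha,\beta$ (in an algebraic closure), we have $T_n=A\alpha^n+B\beta^n$ for suitable constants, and each term $n^j\alpha^n$ is killed by $(E-\alpha)^{j+1}$ because
\[
(E-\alpha)\bigl(p(n)\alpha^n\bigr)=\bigl(p(n+1)-p(n)\bigr)\alpha^{n+1},
\]
which lowers the polynomial degree by one. Therefore $n^jT_n$ is killed by $(E-\alpha)^{j+1}(E-\beta)^{j+1}=F(E)^{j+1}$. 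The same conclusion for $n^jT_{n-1}$ follows by rewriting $T_{n-1}=A\alpha^{-1}\alpha^n+B\beta^{-1}\beta^n$, which has the same form as $T_n$. The argument goes through unchanged in the confluent case $\alpha=\beta$, since then $F^{j+1}=(X-\alpha)^{2j+2}$ already contains $(X-\alpha)^{j+2}$, which kills $n^{j+1}\alpha^n$.

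\textbf{Step 3: combining.} Since $\tau_{n,r}$ is a $\mathbb{Q}(P,Q)$-linear combination of the sequences $n^jT_n$ ($0\le j\le r$) and $n^jT_{n-1}$ ($0\le j\le r-1$), each of which is annihilated by $F(E)^{j+1}\,\big|\,F(E)^{r+1}$, the whole sequence $\tau_{n,r}$ is annihilated by $F(E)^{r+1}$. Expanding $F(X)^{r+1}=\sum_{j=0}^{2r+2}f_{r,j}(P,Q)X^j$ and evaluating the resulting operator at $\tau_{\cdot,r}$ produces exactly
\[
\sum_{j=0}^{2r+2}f_{r,j}(P,Q)\,\tau_{n+j,r}=0,
\]
which is the conjecture.

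The only nontrivial point is Step~2, which is a well-known fact about kernels of constant-coefficient difference operators; the rest is bookkeeping on the Leibniz expansion in Step~1. I do not anticipate any real obstacle: the conjecture is essentially a general feature of Taylor coefficients of products of a fixed analytic function with $e^{nx}$ where $T_n$ solves a fixed second-order linear recurrence, and the proof above does not even use the specific form of $B(x)$ beyond the fact that it is analytic at $0$ with $B(0)=0$.
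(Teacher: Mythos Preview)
Your argument is correct, and in fact it \emph{proves} what the paper leaves open: in the paper this statement is labelled a Conjecture and is supported only by the explicit checks for $r=1,2$; no proof is given. So there is nothing to compare against---you have supplied the missing argument.

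Briefly on the soundness of each step. Step~1 is a straightforward Leibniz expansion of $\tau_n(x)=(T_n-T_{n-1}B(x))e^{nx}$ at $x=0$, and your formula
\[
\tau_{n,r}=n^rT_n-T_{n-1}\sum_{k=1}^{r}\binom{r}{k}B^{(k)}(0)\,n^{r-k}
\]
reproduces the paper's $\tau_{n,1}$ and $\tau_{n,2}$ exactly. Step~2 is the standard fact that $(E-\lambda)^{j+1}$ annihilates $p(n)\lambda^n$ for $\deg p\le j$; your treatment of the confluent case is correct since $F(E)^{j+1}=(E-\lambda)^{2j+2}$ then kills degree up to $2j+1\ge j+1$. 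Step~3 is immediate linearity.

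Two minor remarks if you intend to insert this into the paper. First, avoid reusing the symbols $\alpha,\beta$ for the roots of $F$, since the paper already reserves them for the Somos-4 coefficients; use e.g.\ $\lambda,\mu$. Second, it is worth stating explicitly that $Q\ne 0$ (implicit in the Riccati equation~(\ref{54})), so that neither root vanishes and the rewriting $T_{n-1}=(A\lambda^{-1})\lambda^n+(B\mu^{-1})\mu^n$ is legitimate. As you observe, the argument uses nothing about $B$ beyond analyticity at $0$ and $B(0)=0$, so the conjecture is really a general fact about Taylor coefficients of $(T_n-T_{n-1}B(x))e^{nx}$.
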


\section*{Acknowledgements}

This work was supported by the Project No. 121041300058-1 of the Ministry of Education and Science of the Russian Federation.


\begin{thebibliography}{44}

\bibitem{Bell}
E.T. Bell, Analogies between the $u_n$, $v_n$ of Lucas and elliptic functions,
Bull.  Amer. Math. Soc. 29 (1923) 401--406.


\bibitem{Fomin}
S. Fomin, A. Zelevinsky,
The Laurent Phenomenon, 
Adv. Appl. Math. 28(2) (2002) 119-144.



\bibitem{Fordy}
A.~P. Fordy,  A.~N.~W. Hone, 
Discrete integrable systems and Poisson algebras from cluster maps,
Commun. Math. Phys. 325 (2014) 527-584. 

\bibitem{Gale}
D. Gale,
The strange and surprising saga of the Somos sequences,
Mathematical Intelligencer 13(1) (1991) 40-42.


\bibitem{Gale2}
D. Gale,  
Somos sequence update,
Math. Intelligencer 13(4) (1991) 49-50. 

\bibitem{Graham}
R.~L. Graham, D.~E. Knuth,  O. Patashnik. Concrete mathematics. Addison-Wesley Publishing Company, Reading, MA, second edition,
1994.


\bibitem{Hone1}
A.~N.~W. Hone, 
Elliptic curves and quadratic recurrence sequences, Bull. London Math. Soc. 37(2)
(2005) 161-171; Corrigendum, Bull. London Math. Soc. 38(5) (2006) 741-742.


\bibitem{Hone2}
A.~N.~W. Hone, C. Swart,
Integrality and the Laurent phenomenon for Somos 4 and Somos 5 sequences, 
Math. Proc. Cambridge Philos. Soc. 145(1) (2008) 65-85. 


\bibitem{Hone4}
A.~N.~W. Hone,  
Sigma function solution of the initial value problem for Somos 5 sequences, 
Trans.  Amer. Math. Soc. 359(10), (2007) 5019-5034.

\bibitem{Hone3}
A.~N.~W. Hone, T.~E. Kouloukas, G.~R.~W. Quispel,  
Some integrable maps and their Hirota bilinear forms,
J.  Phys. A: Math. Theor. 51 (2018)  Art. No. 044004.

\bibitem{Lehmer}
D.~H. Lehmer,  
An extended theory of Lucas' functions,
Ann. of Math. 31(3) (1930)  419--448.

\bibitem{Lukas}
E. Lukas,
Th\'eorie des Nombres (1878) Reprint. Librarie Blanchard, Paris, 1961.


\bibitem{Malouf}
J.~L. Malouf,  
An integer sequence from a rational recursion,
Disc. Math. 110(1-3) (1992) 257-261.

\bibitem{Manakov}
S.~V. Manakov,
Complete integrability and stochastization of discrete dynamical systems
Zh. Exp. Teor. Fiz. 67(2) (1974) 543-555.

\bibitem{Sloane}
OEIS, published electronically at https://oeis.org, 2010.


\bibitem{Poorten}
A.~J. van der Poorten , C.~S. Swart,  
Recurrence relations for elliptic sequences: every Somos 4 is a Somos-$k$,
Bull. Lond. Math. Soc.
38 (2006) 546--554.


\bibitem{Robinson}
R. Robinson,  
Periodicity of Somos sequences, 
Proc. Am. Math. Soc. 
116 (1992) 613-619 



\bibitem{Stange}
K. Stange,
Elliptic nets and elliptic curves,
Algebra and Number Theory 5(2) (2011) 197-229.


\bibitem{Svinin1}
A.~K. Svinin, 
On some classes of discrete polynomials and ordinary difference equations,
J. Phys. A: Math. Theor. 47(15) (2014) Art. No. 155201.

\bibitem{Svinin2}
A.~K. Svinin, 
On integrals for some class of ordinary difference equations admitting a Lax representation,
J. Phys. A: Math. Theor. 49(9) (2016) Art. No. 095201.

\bibitem{Svinin}
A.~K. Svinin, 
On solutions for some class of integrable difference equations,
J.  Differ. Equ.  Appl. 27(12) (2021) 1734-1750.



\bibitem{Swart}
C.~S. Swart,  
Elliptic curves and related sequences,
PhD thesis 2003
Royal Holloway, University of London.

\bibitem{Vajda}
S. Vajda, 
Fibonacci and Lucas Numbers, and the Golden Section. Theory and Applications, Chichester: Ellis Horwood Ltd., Halsted Press, New York, 1989



\bibitem{Ward}
M. Ward, 
Memoir on elliptic divisibility sequences,
Amer. J. Math. 70(1) (1948)  31-74.


\bibitem{Zakharov}
V.~E. Zakharov, S.~L. Musher A.~M. Rubenchik,  
Nonlinear stage of parametric wave excitation in a plasma
JETP Lett. 19(5) (1974) 151--152.




 \end{thebibliography}
\end{document}